\documentclass[12pt,twoside,reqno]{article}

\usepackage{amsfonts, amsthm, amsmath, amssymb}
\usepackage{hyperref}
\usepackage{mathrsfs}
\usepackage{indentfirst}
\usepackage{bigints}
\hypersetup{colorlinks=false}

\usepackage[margin=1.125in]{geometry}

\numberwithin{equation}{section}

\allowdisplaybreaks[4]

\newtheorem{thm}{Theorem}[section]
    \newtheorem{cor}[thm]{Corollary}
    \newtheorem{lem}[thm]{Lemma}

    \theoremstyle{definition}

    \numberwithin{equation}{section}
\begin{document}
\title{The exceptional set for Diophantine inequality with mixed powers of primes}
\author{Yu Fu, Linzhu Fu, Liqun Hu\\
Department of Mathematics, Nanchang University,\\
Nanchang, Jiangxi 330031, P.R. China\\
E-mail:fuyu7812@163.com\\ fulinzhu2024@163.com\\and  huliqun@ncu.edu.cn }
\date{}
\maketitle
{\bf Abstract}:
    Assume that $\lambda _1, \lambda _2, \lambda_3,\lambda_4,\lambda_5,\lambda_6,\lambda_7$ are non-zero real numbers , $\lambda _1/\lambda _2$ is an irrational number. Let $\mathcal{V}   $  be a well-spaced sequence, and $\delta >0$. For any given positive integer $k\geq 5$ and any $\varepsilon >0$, we give the upper bound of the number of $\upsilon \in \mathcal{V}   $ with $\upsilon \leq X$ for which the inequality 
    $$
    \left | \lambda_1p_1^2+\lambda _2p_2^3+\lambda _3p_3^3+\lambda _4p_4^3+\lambda _5p_5^3+\lambda _6p_6^4+\lambda _7p_7^k-\upsilon  \right | <{\upsilon}^{-\delta}
    $$ 
    has no solution in primes $p_1,  p_2, p_3, p_4, p_5, p_6, p_7$.

\textbf{keywords}: Exceptional set$\cdot$ Sieve functions$\cdot$ Diophantine inequality$\cdot$ Prime

\textbf{Mathematics Subject Classification} 11D75 $\cdot$ 11P32
\section{Introduction}

Let $\lambda_1,\lambda_2,\lambda_3,\lambda_4,\lambda_5,\lambda_6,\lambda_7$ be non-zero real numbers, not all negative and the ratio of at least two numbers is irrational. In 1994, Yu \cite{yhb} considered the inequality
$$
 \left| \lambda_1x_1^2+\lambda _2x_2^3+\lambda _3x_3^3+\lambda _4x_4^3+\lambda _5x_5^3+\lambda _6x_6^4+\lambda _7x_7^4+\eta\right| < (\max_{1\leq j\leq7} \left | x_j  \right | )^{-\sigma}
$$
and proved that the inequality has infinite integer solutions  $x_j$ $(1\leq j\leq7)$ with arbitrary real number $\eta$  for $0<\sigma<1/36$. In 2005, Li \cite{lwp} proved the more general Diophantine inequality
\begin{equation}
\left| \lambda_1x_1^2+\lambda _2x_2^3+\lambda _3x_3^3+\lambda _4x_4^3+\lambda _5x_5^3+\lambda _6x_6^a+\lambda _7x_7^b+\eta\right| < (\max_{1\leq j\leq7} \left | x_j  \right | )^{-\sigma}
\end{equation}
 has infinite integer solutions  $x_j$ $(1\leq j\leq7)$  with arbitrary real number $\eta$ for a=4, b=5 and $0<\sigma<1/30$. Li and Gong \cite{lwp2} proved that for $a=3$, $4\leq b\leq11$ or $a=4$, $4\leq b \leq5$, the inequality (1.1) has infinitely many  positive integer solutions  $x_j$ $(1\leq j\leq7)$ for some $\sigma=\sigma(a,b)>0.$ Moreover, when $a=4$, $b\geq5$, we let $b = k$, Xi and Mu \cite{xly} proved the inequality
 \begin{equation}
 	\left| \lambda_1x_1^2+\lambda _2x_2^3+\lambda _3x_3^3+\lambda _4x_4^3+\lambda _5x_5^3+\lambda _6x_6^4+\lambda _7x_7^k+\eta\right| < (\max_{1\leq j\leq7} \left | x_j  \right | )^{-\sigma}\notag
 \end{equation}
  has infinitely many positive integer solutions  $x_j$ $(1\leq j\leq7)$  with arbitrary real number $\eta$ for $a=4$, $k\in$\{5, 6\} and $0<\sigma<\frac{1}{12(k-3)}$.
 We call an increasing sequence $\upsilon_1<\upsilon_2<\cdot\cdot\cdot$ of positive real numbers a well-spaced sequence if there exist positive constants $C>c>0$ such that 
 $$
 0<c<{\upsilon}_{i+1}-{\upsilon}_{i}<C,\enspace i=1, 2,\dots.
 $$
 Then, in this paper,  we consider the exceptional set for the inequality
$$
\left | \lambda_1p_1^2+\lambda _2p_2^3+\lambda _3p_3^3+\lambda _4p_4^3+\lambda _5p_5^3+\lambda _6p_6^4+\lambda _7p_7^k-\upsilon  \right | <{\upsilon}^{-\delta}, 
$$
when $k\in \mathbb{Z},$ $k\geq5,$    $\upsilon\in\mathcal{V} $ and $\mathcal{V}$ is a  well-spaced sequence.

We prove the following theorem.

$\mathbf{Theorem 1.1} $
Suppose that $k\ge 5$ is a positive integer, $\lambda _1,\lambda _2,\lambda_3,\lambda_4,\lambda_5,\lambda_6,\lambda_7$ are positive real numbers, and $\lambda _1/\lambda _2$ is an irrational and algebraic number. Assume that $\mathcal {V} $ is a well-spaced sequence and $\delta >0$. For any given $\varepsilon >0$ , the number $E_k(\mathcal{V} , X, \delta)$ of $\upsilon \in \mathcal{V} $ with $\upsilon \leq X$ for which the inequality 
\begin{equation}\label{bbb}
 \left | \lambda_1p_1^2+\lambda _2p_2^3+\lambda _3p_3^3+\lambda _4p_4^3+\lambda _5p_5^3+\lambda _6p_6^4+\lambda _7p_7^k-\upsilon  \right | <{\upsilon}^{-\delta}
\end{equation}
has no solution within the primes $p_1, p_2, p_3, p_4, p_5, p_6, p_7$,  satisfies
\begin{equation}\label{eee}
    E_k(\mathcal{V}, X, \delta)\ll X^{\sigma(k) +2\delta  +\varepsilon },\nonumber
\end{equation}
where
\begin{equation}\label{bbb}
\sigma (k)=\frac{44}{45} -\frac{24}{25m_1(k)},
\end{equation}
and
\begin{equation}\label{aaa}
     m_1(k)= 
\begin{cases} 
  2\lceil343k/50-20\rceil, & \text{if $k=5$,}\\  
  2\lceil (49k/50-\lfloor 49k/50\rfloor/2)(\lfloor 49k/50\rfloor+1)\rceil, 
  & \text{if $k\geq6$.}\nonumber
\end{cases} 
\end{equation}

{\bf Remark.}
Compared with  the previous work, we consider the prime solution of the problem with a well-spaced sequence and explore the exception set of the inequality (1.2).

To prove Theorem 1.1, we adopt the circle method from Davenport-Heilbronn \cite{DH} and combine it with the sieve method. Our paper addresses the case of exceptional
sets, utilizing a more precise estimate of the integral through the optimal application of Hölder’s inequalities, as detailed in \cite{qyy}.

The following are the components of this paper. In Sect.2, we give the outline of the proof of Theorem 1.1. In Sects.3-5, we estimate the major arc, the minor arc and the trivial arc. In the end, in Sect.6, we complete the proof of Theorem 1.1.

{\bf Notations.}
 In this paper, the letter $p$, with or without subscript, represents a prime number. The letter $\varepsilon $ represents a sufficiently small positive number, The value of it may vary in different cases. Let $\left \lceil x \right \rceil $ denote the smallest integer not less than $x$, $\left \lfloor x \right \rfloor $ denote the largest integer not exceeding $x$. We write $e(x)=\exp (2\pi ix )$ and abbreviate $\log X$ to $L$.

\section{Outline of the method  }

Let $k\ge 5$ be an integer and $\eta$ be a fixed small positive number. Let $0<\tau<1$. Denote 
$$
K_{ \tau}(\alpha )=\left ( \frac{\sin \pi\tau\alpha   }{\pi \alpha }  \right ) ^2
$$
for $\tau>0$ and $\alpha\ne 0$. By continuity, $K_\tau(0)={\tau^2}$.
Then
\begin{equation}\label{bbb}
K_\tau (\alpha )\ll \min (\tau ^2,\left | \alpha  \right |^{-2} ).
\end{equation}
According to the results of the Fourier transform, we can get
\begin{equation}\label{bbb}
\hat{K} _\tau (t)=\int _\mathbb{R}{K_\tau (\alpha )e(t\alpha )}d\alpha =\max(0,\tau -\left | t \right | ).
\end{equation}
Assume that  $a/q$ is a convergent to $\lambda _1/\lambda _2$ as the denominator $q$ tends to infinity, we can deduce the following conclusion under fixed values of $X=q^{\frac{126}{101} }$.

Write
$$
I_1=[(\eta X)^{\frac{1}{2} },X^{\frac{1}{2} }],\enspace I_2=[(\eta X)^{\frac{1}{3} },X^{\frac{1}{3} }],\enspace I_3=[(\eta X)^{\frac{1}{4} },X^{\frac{1}{4} }],\enspace I_4=[(\eta X)^{\frac{1}{k} },X^{\frac{1}{k} }],
$$
and $\varOmega=I_1\times I_2 \times I_3 \times I_4$.
We use the function $\rho(m)$ defined in Harman and Kumchev \cite{H}(see also \cite[Sect.8]{G}). Write
$$
\psi (m,z)=
\begin{cases}
1, & \text{if $p\mid m$$\Rightarrow$ $p\ge z$,} \\
0,& \text{otherwise.}\\
\end{cases}
$$
The function $\rho(m)$ takes the form
$$
\rho (m)=\psi (m,{X^{5/42}})-\sum_{X^{5/42} \leq p\leq {X^{1/4} } }\psi (m/p,z(p)),
$$
where
$$
z(p)=
\begin{cases} 
  X^{5/28}p^{-1/2}, & \text{if $p< X^{1/7}$,}\\  
  p, & \text{if $X^{1/7}\leq p\leq X^{3/14}$,}\\  
  X^{5/14}p^{-1}, 
  & \text{if $p>X^{3/14}$.}
\end{cases}
$$
For $m\leq X^{\frac{1}{2}},$ it follows from the construction of $\rho(m)$ that (see \cite[(2.3)]{H})
$$
\rho(m)\leq
\begin{cases}
	1, & \text{if $m$ is prime, } \\
	0,& \text{otherwise.}\\
\end{cases}
$$
Let $I'$ be any subinterval of $I_1.$ Then we have
\begin{equation}\label{bbb}
\sum_{m\in I'} \rho (m)=\kappa \left |I'  \right | L^{-1}+  O(X^\frac{1}{2} L^{-2}),
\end{equation}
where $\kappa>0$ is an absolute constant. In addition, we write
\begin{align*}
S_2(\alpha ,\rho  )&=\sum_{m_1\in I_1} \rho (m_1) e(m_1^2\alpha ),\notag\\
S_3(\alpha )&=\sum_{p_j\in I_2} \log p_j e(p_j^3\alpha ),\enspace j=2,3,4,5,\notag\\
S_4(\alpha )&=\sum_{p_6\in I_3} \log p_6 e(p_6^4\alpha ),\notag\\
S_k(\alpha )&=\sum_{p_7\in I_4} \log p_7 e(p_7^k\alpha).
\end{align*}
For any measurable subset $\mathfrak{X} $ of $\mathbb{R}$, let
$$
I(\tau ,\upsilon,\mathfrak{X} ,\rho )=
\bigintsss_{\mathfrak{X} }S_2(\lambda _1\alpha,\rho )\prod\limits_{j=2}^5S_3(\lambda _j\alpha )S_4(\lambda _6\alpha )S_k(\lambda _7\alpha )K_\tau (\alpha )e(-\upsilon \alpha )d\alpha .
$$
Then
\begin{align}
     {I}&(\tau ,\upsilon ,\mathbb{R} ,\rho ) \nonumber\\
     =&\sum\limits_{(m_1,p_2,p_3,p_4,p_5,p_6,p_7)\in \Omega}\rho (m_1)\log p_2\log p_3\log p_4\log p_5\log p_6\log p_7  \nonumber\\
    & \times {\bigintsss_{\mathbb{R}}K_\tau (\alpha ) e((\lambda _1m_1^{2}+\lambda _2p_2^{3}+\lambda _3p_3^{3}+\lambda _4p_4^{3}+\lambda _5p_5^{3}+\lambda _6p_6^{4}+\lambda _7p_7^{k}-\upsilon )\alpha) }d\alpha \nonumber\\
    =&\sum\limits_{(m_1,p_2,p_3,p_4,p_5,p_6,p_7)\in \Omega}\rho (m_1)\log p_2\log p_3\log p_4\log p_5\log p_6\log p_7 \nonumber\\
    & \times \max(0,\tau -\left |\lambda _1m_1^{2}+\lambda _2p_2^{3}+\lambda _3p_3^{3}+\lambda _4p_4^{3}+\lambda _5p_5^{3}+\lambda _6p_6^{4}+\lambda _7p_7^{k}-\upsilon\right | )\nonumber\\
    \leq & \tau L^6\mathcal{N} (x),\nonumber
\end{align}
where $\mathcal{N} (x)$ represents the number of solutions of inequality 
$$
\left |\lambda _1p_1^{2}+\lambda _2p_2^{3}+\lambda _3p_3^{3}+\lambda _4p_4^{3}+\lambda _5p_5^{3}+\lambda _6p_6^{4}+\lambda _7p_7^{k}-\upsilon   \right |<\tau
$$
with $(p_1,p_2,p_3,p_4,p_5,p_6,p_7)\in \Omega$.

To estimate the integral $I(\tau ,\upsilon ,\mathbb{R}   ,\rho ) $  as needed, we divide the whole interval into major arc $\mathfrak{M}$, minor arc $\mathfrak{m}$ and trivial arc $\mathfrak{t}$. We define
$$
\mathfrak{M} =\left [ -\frac{P}{X} , \frac{P}{X} \right ] ,\enspace
\mathfrak{m} =\bigg( -R , -\frac{P}{X}\bigg) \cup \bigg(\frac{P}{X},R\bigg),\enspace
\mathfrak{t} =\mathbb{R}\backslash(\mathfrak{M\cup m} ),
$$
where $P=X^{{\frac{1}{4}}-\varepsilon }$, $R=\tau ^{-2}X^{\frac{5k-12}{32k} }L^3.$ Thus,
$$
I(\tau ,\upsilon ,\mathbb{R}   ,\rho )=I(\tau ,\upsilon ,\mathfrak{M}   ,\rho )+I(\tau ,\upsilon ,\mathfrak{m}   ,\rho )+I(\tau ,\upsilon ,\mathfrak{t}   ,\rho ).
$$

The main contribution to the integral $I(\tau,\upsilon,\mathbb{R},\rho)$ comes from the major arc. In order to handle the integral on the major arc, we need some lemmas. First, we define
for $r\geq1$, 
$$
J_r(X,h)={\bigintsss_{\varepsilon X}^X \bigg( \theta\Big((x+h)^{\frac{1}{r}}\Big)-\theta(x^\frac{1}{r})-\Big((x+h)^{\frac{1}{r}}-x^{\frac{1}{r}}\Big)\bigg)^2} d\alpha
$$
where $\theta(x)=\sum_{p\leq x}\log p$ is the Chebyshev function, and  
\begin{align*}
	U_3(\alpha)=\sum_{n\in I_2} e(n^3\alpha ),\enspace
	U_4(\alpha )=\sum_{n\in I_3} e(n^4\alpha ),\enspace
	U_k(\alpha )=\sum_{n\in I_4} e(n^k\alpha).
\end{align*}
The following Lemmas 2.1 and 2.2 are Theorems 3.1 and 3.2 in \cite{666}, respectively.
\begin{lem}
	Let $r\geq1$ be a real number. For $0\leq Y\leq1/2$ we have 
$$
{\bigintsss_{-Y}^Y \left| S_r(\alpha)-U_r(\alpha)\right|^2}d\alpha\ll\frac{X^{\frac{2}{r}-2}\log^2X}{Y}+Y^2X+Y^2J_r\bigg(X,\frac{1}{2Y}\bigg).
$$
\end{lem}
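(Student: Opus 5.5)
This is the classical Gallagher-type mean-square estimate (a Brüdern--Cook--Perelli style lemma), and I would prove it in three steps. First, pass from an integral over $[-Y,Y]$ to a short-interval mean square via Gallagher's lemma. Second, compare the weighted prime sum with the unweighted sum over short intervals. Third, split the resulting error into a smooth part and a prime-fluctuation part.

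\textbf{Step 1: Gallagher's lemma.} Write
$$
S_r(\alpha)-U_r(\alpha)=\sum_{n} c_n e(n\alpha),
$$
where $n$ runs over $r$-th powers $m^r$ with $m$ in the relevant interval. The coefficients are $c_n=\log m-1$ if $m$ is prime and $c_n=-1$ otherwise, extended by zero outside $[\eta X, X]$. Gallagher's lemma, applied with $T=Y$ and $\delta=1/(2Y)$, gives
$$
\int_{-Y}^{Y}\Big|\sum_n c_ne(n\alpha)\Big|^2d\alpha\ll Y^2\int_{\mathbb{R}}\Big|\sum_{x<n\le x+h}c_n\Big|^2dx,\qquad h=\frac{1}{2Y}.
$$

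\textbf{Step 2: rewrite the inner sum.} For $x$ with $[x,x+h]$ inside $[\eta X,X]$, the inner sum equals
$$
\theta\big((x+h)^{1/r}\big)-\theta\big(x^{1/r}\big)-\#\{m: x<m^r\le x+h\}.
$$
The counting term is $(x+h)^{1/r}-x^{1/r}+O(1)$. The prime-power and bounded-error discrepancies therefore contribute $O(1)$ per $x$. Integrated over a range of length $\ll X$, this gives $Y^2X$. The main fluctuation integrates to exactly $Y^2J_r(X,h)$, up to the change of lower limit $\eta X$ versus $\varepsilon X$, which is handled by adjusting the constant.

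\textbf{Step 3: the boundary ranges.} These are the values of $x$ within distance $h$ of $\eta X$ or of $X$; they give the term $X^{2/r-2}\log^2X/Y$. In that range the inner sum is bounded by $\log X$ times the number of $m$ with $m^r$ in an interval of length $h$ near $X$. That count is $\ll hX^{1/r-1}+1$. Squaring and integrating over a set of $x$ of measure $\ll h$ gives
$$
Y^2h\big(h^2X^{2/r-2}+1\big)\log^2X.
$$
When $hX^{1/r-1}\ge1$, this is exactly $X^{2/r-2}\log^2X/Y$. Otherwise it is $\ll Y\log^2X$.

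\textbf{Main obstacle.} The leftover $Y\log^2X$ must be absorbed into the stated terms. For $Y\le1/2$ it is certainly dominated by $Y^2X$ once $Y\ge X^{-1}\log^2 X$. For smaller $Y$ it is dominated by the first term, since $X^{2/r-2}/Y\ge Y$ whenever $Y\le X^{1/r-1}$. In the intermediate range one uses $\log$ weights more carefully: the $O(1)$ errors there are bounded, not $\log X$-size, so the $Y^2X$ term suffices. I expect this bookkeeping of ranges to be the only delicate point. The argument itself follows Theorem 3.1 of \cite{666}.
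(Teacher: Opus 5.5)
Your argument is correct and follows the same route as Theorem 3.1 of \cite{666}, which the paper cites in place of a proof: Gallagher's lemma with $h=1/(2Y)$, then splitting the $x$-range into the interior (giving $Y^2J_r(X,h)+Y^2X$) and two boundary strips of length $\ll h$ (giving $X^{2/r-2}\log^2X/Y+Y\log^2X$). The ``intermediate range'' in your last paragraph is empty for large $X$, since $X^{1/r-1}\ge X^{-1}\log^2X$ once $X^{1/r}\ge\log^2X$ (with $r$ fixed), so your first two sentences there already absorb $Y\log^2X$; your final claim that the boundary errors are $O(1)$ rather than of size $\log X$ is false (a prime $m$ contributes $\log m-1$), but it is never needed.
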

\begin{lem}
	Let $r\geq1$ be a real number. There exists a positive constant $c_1$=$c_1(\varepsilon)$, which doesn't depend on r, such that
$$
J_r(X,h)\ll h^2 X^{\frac{2}{r}-1} \exp \bigg(-c_1\bigg(\frac{\log X}{\log \log X}\bigg)^{\frac{1}{3}}\bigg)
$$
uniformly for $X^{1-\frac{5}{6r}+\varepsilon}\leq h\leq X$.
\end{lem}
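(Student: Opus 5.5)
The plan is to reduce the statement to the classical mean-square estimate for primes in almost all short intervals. I would prove that estimate through the explicit formula for $\theta$, a zero-density estimate and the Vinogradov--Korobov zero-free region, following the route of \cite{666}.

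\textbf{Reduction.} First I would change variables $y=x^{1/r}$, so $x\in[\varepsilon X,X]$ becomes $y\in[(\varepsilon X)^{1/r},X^{1/r}]$ and $dx\asymp X^{1-1/r}dy$. Write $Y=X^{1/r}$. The interval $(x^{1/r},(x+h)^{1/r}]$ has length
$$\ell(y)=(y^r+h)^{1/r}-y\asymp hX^{1/r-1}\quad(h\le X).$$
The hypothesis $h\ge X^{1-\frac{5}{6r}+\varepsilon}$ translates exactly into $\ell\gg Y^{1/6+\varepsilon'}$, which is Huxley's range for almost all short intervals. It therefore suffices to show
$$\int_{Y_0}^{Y}\bigl(\theta(y+\ell(y))-\theta(y)-\ell(y)\bigr)^2\,dy\ll \ell^2\,Y\exp\Bigl(-c\bigl(\tfrac{\log Y}{\log\log Y}\bigr)^{1/3}\Bigr).$$
Multiplying by $X^{1-1/r}$ then gives $h^2X^{2/r-2}\cdot X^{1/r}\cdot X^{1-1/r}=h^2X^{2/r-1}$ times the saving, as claimed. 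Because $\ell(y)$ varies with $y$, I would split $[Y_0,Y]$ dyadically, or into intervals on which $\ell(y)$ is essentially constant up to a factor $1+O(Y^{-1})$. The error from replacing $\ell(y)$ by a constant is absorbed using the trivial bound $\theta(y+u)-\theta(y)\ll u+\log y$ together with Brun--Titchmarsh.

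\textbf{Explicit formula and Gallagher's lemma.} Next I would apply the truncated explicit formula
$$\theta(y+\ell)-\theta(y)-\ell=-\sum_{|\gamma|\le T}\frac{(y+\ell)^\rho-y^\rho}{\rho}+O\Bigl(\frac{Y\log^2 Y}{T}\Bigr),$$
with $T=Y^{1+\varepsilon}/\ell$, so that the error contributes acceptably in mean square. For the zero sum I would use $\bigl|\bigl((y+\ell)^\rho-y^\rho\bigr)/\rho\bigr|\ll \ell\, y^{\beta-1}$ and estimate its mean square by Gallagher's lemma (equivalently a Saffari--Vaughan argument). This gives
$$\ll \ell^2\,Y\Bigl(\sum_{|\gamma|\le T}Y^{\beta-1}\Bigr)^2\quad\text{or a diagonal form}\quad \ell^2 Y\sum_{\rho,\rho'}\frac{Y^{\beta+\beta'-2}}{1+|\gamma-\gamma'|}.$$
In the second form I would group zeros into unit $\gamma$-intervals, losing only powers of $\log Y$.

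\textbf{Main step: zero density and zero-free region.} The hardest step, and the place where both the exponent $5/6$ and the saving $\exp(-c_1(\log X/\log\log X)^{1/3})$ are produced, is bounding
$$\sum_{|\gamma|\le T}Y^{\beta-1}\ll \int_0^{1-\eta(T)} Y^{\sigma-1}\,dN(\sigma,T)+\ldots$$
Here I would insert Huxley's density estimate $N(\sigma,T)\ll T^{12(1-\sigma)/5}(\log T)^B$ and use the Vinogradov--Korobov zero-free region $\beta\le 1-\eta(T)$ with $\eta(T)=c(\log T)^{-2/3}(\log\log T)^{-1/3}$. Since $T\le Y^{5/6-\varepsilon'}$, we get $T^{12/5}\le Y^{2-\varepsilon''}$, so the square of the density sum is dominated by
$$\max_{\sigma\le 1-\eta}(T^{12/5}/Y^{2})^{1-\sigma}\cdot(\log)^B\ll \exp\bigl(-c\,\eta(T)\log Y\bigr).$$
This is $\ll\exp(-c_1(\log X/\log\log X)^{1/3})$, since $\log T\asymp\log Y\asymp r^{-1}\log X$.

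Finally I would check uniformity in $r$. The constants depend only on $\varepsilon$ and on the lower end $\varepsilon X$, because $r$ enters only through the substitution. The factor $1/r$ inside the logarithms is harmless for fixed $r\ge 1$, and for the statement as used one may absorb it into $c_1$.
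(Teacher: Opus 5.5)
The paper gives no argument for this lemma; it quotes Theorem 3.2 of \cite{666}. Your ingredients (truncated explicit formula, Huxley's density estimate, the Vinogradov--Korobov region) are the right ones, but as written the proposal does not prove the statement.

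\textbf{First gap: uniformity in $r$.} The lemma asserts that $c_1$ depends only on $\varepsilon$ and not on $r$, and you do not obtain this. You rewrite the hypothesis as $\ell\gg Y^{1/6+\varepsilon'}$; in fact $\varepsilon'=r\varepsilon$, since $X^{\varepsilon}=Y^{r\varepsilon}$. You then express the saving as $\exp(-c\,\eta(T)\log Y)$ with $\log Y=r^{-1}\log X$, and absorb the $r$-dependence into $c_1$. That is the fixed-$r$ version, which is enough for the paper's use with $r\in\{3,4,k\}$, but it is not the lemma.

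The repair is to keep the margin as a power of $X$. Since $\ell\gg Y^{1/6}X^{\varepsilon}$, take $T=X^{\varepsilon/2}Y/\ell$. (With $T=Y^{1+\varepsilon}/\ell$ and the same $\varepsilon$, there is no saving at all when $r=1$.) With this choice:
\begin{itemize}
\item the truncation error remains admissible;
\item $T\le Y^{5/6}X^{-\varepsilon/2}$ gives $(T^{12/5}Y^{-2})^{1-\sigma}\le X^{-6\varepsilon(1-\sigma)/5}$;
\item $T\le X$ gives $1-\beta\ge\eta(T)\ge\eta(X)$ for every zero counted.
\end{itemize}
Hence $\sum_{|\gamma|\le T}Y^{2\beta-2}\ll(\log X)^{B+1}\exp\bigl(-\tfrac{6\varepsilon}{5}\,\eta(X)\log X\bigr)$, and $\eta(X)\log X\asymp(\log X/\log\log X)^{1/3}$. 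So $c_1\asymp\varepsilon$ uniformly in $r$, which is also why $c_1$ depends on $\varepsilon$.

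\textbf{Second gap: the mean-square step.} This step has to be the diagonal bound, and it does not follow from what you wrote. Bounding each term by $\ell y^{\beta-1}$ leaves $\ell^2Y\bigl(\sum_{|\gamma|\le T}Y^{\beta-1}\bigr)^2$. Huxley's estimate makes this small only for $T^{12/5}\le Y^{1-\varepsilon}$, i.e.\ $\ell\ge Y^{7/12+\varepsilon}$. Only $\sum_{\rho}Y^{2\beta-2}$, not the square of $\sum_\rho Y^{\beta-1}$, is governed by $\max_\sigma(T^{12/5}Y^{-2})^{1-\sigma}$.

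To get the exponent $5/6$ you must keep the phases. Write $\frac{(y+\ell)^\rho-y^\rho}{\rho}=\int_y^{y+\ell}u^{\rho-1}\,du$, move the sum over $\rho$ inside, and apply Cauchy--Schwarz and Fubini. This bounds the mean square of the zero sum by $(\max_y\ell(y))^2\int\bigl|\sum_{|\gamma|\le T}u^{\rho-1}\bigr|^2\,du\ll\ell^2Y(\log T)^2\sum_{|\gamma|\le T}Y^{2\beta-2}$.

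This also handles the variation of $\ell(y)$ without any splitting, since $\{y:\ y\le u\le y+\ell(y)\}$ has measure at most $\max\ell$. The splitting you propose, into pieces on which $\ell$ is constant up to a factor $1+O(Y^{-1})$, would force pieces of bounded length and achieves nothing. Equivalently, you can skip the substitution and work with fixed $h$ in the $x$-variable, via $\frac{(x+h)^{\rho/r}-x^{\rho/r}}{\rho}=\frac{1}{r}\int_x^{x+h}v^{\rho/r-1}\,dv$.
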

 Combining Lemmas 2.1 and 2.2, we can deduce the following lemma easily.
\begin{lem}
	Let $r \geq1$ be a real number. For any fixed real number $A\geq6$, we have
$$
{\bigintsss_{\left|\alpha\right| \leq X^{\frac{5}{6r}-1-\varepsilon}} \left| S_r(\alpha)-U_r(\alpha)\right|^2}d\alpha\ll X^{\frac{2}{r}-1}(\log X)^{-A}.
$$
\end{lem}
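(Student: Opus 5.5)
Set $Y = X^{\frac{5}{6r}-1-\varepsilon}$, so that the integral in Lemma 2.3 is exactly the left-hand side of Lemma 2.1 with this $Y$. Since $r\ge1$, we have $\frac{5}{6r}-1-\varepsilon<0$, hence $0\le Y\le 1/2$ for $X$ large and Lemma 2.1 applies. It gives
$$
\int_{-Y}^{Y}|S_r(\alpha)-U_r(\alpha)|^2\,d\alpha\ll \frac{X^{\frac{2}{r}-2}\log^2X}{Y}+Y^2X+Y^2J_r\Big(X,\frac{1}{2Y}\Big),
$$
and I will bound the three terms separately against $X^{\frac{2}{r}-1}(\log X)^{-A}$.

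\emph{First term.} Since $1/Y=X^{1-\frac{5}{6r}+\varepsilon}$,
$$
\frac{X^{\frac{2}{r}-2}\log^2X}{Y}=X^{\frac{2}{r}-1}\cdot X^{-\frac{5}{6r}+\varepsilon}\log^2X .
$$
This saves a power of $X$ provided $\varepsilon<\frac{5}{6r}$, which holds for fixed $r$ once $\varepsilon$ is small.

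\emph{Second term.} We have $Y^2X=X^{\frac{5}{3r}-1-2\varepsilon}$. Comparing with $X^{\frac{2}{r}-1}$, the ratio is $X^{-\frac{1}{3r}-2\varepsilon}$, again a power saving.

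\emph{Third term.} Put $h=\frac{1}{2Y}=\frac12X^{1-\frac{5}{6r}+\varepsilon}$. Lemma 2.2 needs $X^{1-\frac{5}{6r}+\varepsilon}\le h\le X$. The upper bound is clear. The lower bound fails only by the factor $\frac12$, so I absorb it by applying Lemma 2.2 with $\varepsilon$ replaced by $\varepsilon/2$, whose range $h\ge X^{1-\frac{5}{6r}+\varepsilon/2}$ does contain $h$. This gives
$$
Y^2J_r(X,h)\ll Y^2h^2X^{\frac{2}{r}-1}\exp\Big(-c_1\Big(\tfrac{\log X}{\log\log X}\Big)^{1/3}\Big).
$$
Since $Y^2h^2=\tfrac14$, the right side is $\ll X^{\frac{2}{r}-1}\exp\big(-c_1(\log X/\log\log X)^{1/3}\big)$. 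The exponential factor is $\ll_A(\log X)^{-A}$ for every fixed $A$, so this term is also acceptable.

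Adding the three bounds gives the lemma, since each power saving beats any power of $\log X$. The only delicate point is matching the range condition of Lemma 2.2, handled above by the factor-$2$ adjustment of $\varepsilon$. The hypothesis $A\ge6$ plays no role beyond fixing $A$.
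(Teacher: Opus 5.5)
Your proposal is correct and follows the paper's own route: the paper just says Lemma 2.3 follows "easily" by combining Lemmas 2.1 and 2.2. You make that explicit by taking $Y=X^{\frac{5}{6r}-1-\varepsilon}$ in Lemma 2.1, noting that the first two terms give power savings, and bounding the $J_r$ term via Lemma 2.2, where $Y^2h^2=\tfrac14$ and the $\varepsilon\to\varepsilon/2$ adjustment handles the range condition on $h$.
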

	 
\section{The major arc}
In this section, we estimate the contribution of the major arc. Let $\phi=X^{-1+\frac{1}{2k}-\varepsilon}$. We first consider the region $\mathfrak{M}^{\textasteriskcentered} \subseteq \mathfrak{M}$ with $\mathfrak{M}^{\textasteriskcentered} = \left\lbrace \alpha:\left| \alpha\right| \leq\phi\right\rbrace .$ Write
\begin{align}
T_2(\alpha)=&\bigintsss_{I_1}e(t^2\alpha)dt,\enspace T_3(\alpha)=\bigintsss_{I_2}e(t^3\alpha)dt,\notag\\
T_4(\alpha)=&\bigintsss_{I_3}e(t^4\alpha)dt,\enspace
T_k(\alpha)=\bigintsss_{I_4}e(t^k\alpha)dt.
\end{align}
  By Prime Number Theorem and the first derivative estimate for trigonometric integrals (see \cite{T}), we have
\begin{eqnarray}
  	S_2(\alpha,\rho )\ll X^{\frac{1}{2}}L^{-1}, & &T_2(\alpha)\ll X^{\frac{1}{2}-1}\min (X, |\alpha|^{-1}),\notag\\
  S_3(\alpha )\ll X^{\frac{1}{3}}, & &T_3(\alpha)\ll X^{\frac{1}{3}-1}\min (X, |\alpha|^{-1}),\notag\\
  S_4(\alpha )\ll X^{\frac{1}{4}}, & &T_4(\alpha)\ll X^{\frac{1}{4}-1}\min (X, |\alpha|^{-1}),\notag\\
  S_k(\alpha )\ll X^{\frac{1}{k}}, & &T_k(\alpha)\ll X^{\frac{1}{k}-1}\min (X, |\alpha|^{-1}). 
\end{eqnarray}
It follows that 
\begin{align}
    {I}&(\tau,\upsilon ,\mathfrak{M}^{\textasteriskcentered} ,\rho ) \nonumber\\
    =& \bigintsss_{\mathfrak{M}^{\textasteriskcentered}}S_2(\lambda _1\alpha ,\rho )\prod\limits_{j=2}^5S_3(\lambda _j\alpha )S_4(\lambda _6\alpha )K_\tau (\alpha )e(-\upsilon \alpha )d\alpha \nonumber\\
    =& \kappa L^{-1}\bigintsss_{\mathfrak{M}^{\textasteriskcentered} }T_2(\lambda _1\alpha )\prod\limits_{j=2}^5T_3(\lambda _j\alpha )T_4(\lambda _6\alpha )T_k(\lambda _7\alpha )K_\tau (\alpha )e(-\upsilon \alpha )d\alpha  \nonumber\\
    & +\bigintsss_{\mathfrak{M}^{\textasteriskcentered} }(S_2(\lambda _1\alpha ,\rho )-\kappa L^{-1}T_2(\lambda _1\alpha))\prod\limits_{j=2}^5T_3(\lambda _j\alpha)T_4(\lambda _6\alpha )
    T_k(\lambda _7\alpha )K_\tau (\alpha )e(-\upsilon \alpha )d\alpha  \nonumber\\
    & + \bigintsss_{\mathfrak{M}^{\textasteriskcentered} }S_2(\lambda _1\alpha ,\rho )(S_3(\lambda _2\alpha )-T_3(\lambda _2\alpha ))\prod\limits_{j=3}^5T_3(\lambda _j\alpha)T_4(\lambda _6\alpha )T_k(\lambda _7\alpha )K_\tau (\alpha )e(-\upsilon\alpha )d\alpha  \nonumber\\
    & + \bigintsss_{\mathfrak{M}^{\textasteriskcentered}}S_2(\lambda _1\alpha ,\rho )S_3(\lambda _2\alpha )(S_3(\lambda _3\alpha )-T_3(\lambda _3\alpha ))\prod\limits_{j=4}^5T_3(\lambda _j\alpha)T_4(\lambda _6\alpha )T_k(\lambda _7\alpha )K_\tau (\alpha )e(-\upsilon \alpha )d\alpha \nonumber\\
    & + \bigintsss_{\mathfrak{M}^{\textasteriskcentered} }S_2(\lambda _1\alpha ,\rho )\prod\limits_{j=2}^3S_3(\lambda _j\alpha )(S_3(\lambda _4\alpha )-T_3(\lambda _4\alpha ))T_3(\lambda _5\alpha)T_4(\lambda _6\alpha )T_k(\lambda _7\alpha )K_\tau (\alpha )e(-\upsilon \alpha )d\alpha \nonumber\\
    & + \bigintsss_{\mathfrak{M}^{\textasteriskcentered} }S_2(\lambda _1\alpha ,\rho )\prod\limits_{j=2}^4S_3(\lambda _j\alpha )(S_3(\lambda _5\alpha )-T_3(\lambda _5\alpha ))T_4(\lambda _6\alpha )T_k(\lambda _7\alpha )K_\tau (\alpha )e(-\upsilon \alpha )d\alpha \nonumber\\
     & + \bigintsss_{\mathfrak{M}^{\textasteriskcentered}}S_2(\lambda _1\alpha ,\rho )\prod\limits_{j=2}^5S_3(\lambda _j\alpha )(S_4(\lambda _6\alpha )-T_4(\lambda _6\alpha ))T_k(\lambda _7\alpha )K_\tau (\alpha )e(-\upsilon \alpha )d\alpha \nonumber\\
     & + \bigintsss_{\mathfrak{M}^{\textasteriskcentered} }S_2(\lambda _1\alpha ,\rho )\prod\limits_{j=2}^5S_3(\lambda _j\alpha )S_4(\lambda _6\alpha )(S_k(\lambda _7\alpha )-T_k(\lambda _7\alpha ))K_\tau (\alpha )e(-\upsilon \alpha )d\alpha \nonumber\\
    =& J_1+J_2+J_3+J_4+J_5+J_6+J_7+J_8.\nonumber
\end{align}
In the following, we will show that $J_1\gg\tau^2 L^{-1}X^{\frac{13}{12}+\frac{1}{k}}$ and $J_i=o({\tau}^2 L^{-1} X^{\frac{13}{12}+\frac{1}{k}})$ for $i=2,3,4,5,6,7,8$.
Since the estimates for $J_4,J_5,J_6$ are similar to $J_3$, so we restrict our attention to estimate $J_1,J_2,J_3,J_7,J_8$.
\subsection{Lower bound for $J_1$}
We first establish the lower bound for $J_1$. Note that
\begin{eqnarray}
    J_1 &=& \kappa L^{-1}\bigintsss_{\mathfrak{M}^{\textasteriskcentered} }T_2(\lambda _1\alpha )\prod\limits_{j=2}^5T_3(\lambda _j\alpha )T_4(\lambda _6\alpha )T_k(\lambda _7\alpha )K_\tau (\alpha )e(-\upsilon \alpha )d\alpha \notag \\
    &=&\kappa L^{-1}\bigintsss_{\mathbb {R} }T_2(\lambda _1\alpha )\prod\limits_{j=2}^5T_3(\lambda _j\alpha )T_4(\lambda _6\alpha )T_k(\lambda _7\alpha )K_\tau (\alpha )e(-\upsilon \alpha )d\alpha \notag\\
    & &+O\bigg(L^{-1}\bigintsss_{\phi }^{\infty}\left|T_2(\lambda _1\alpha )\prod\limits_{j=2}^5T_3(\lambda _j\alpha )T_4(\lambda _6\alpha )T_k(\lambda _7\alpha )\right|K_\tau (\alpha )d\alpha\bigg).
\end{eqnarray}
Combining this with (2.1) and (3.2), we obtain that the error term in (3.3) satisfies
$$
\ll {\tau}^2 L^{-1} X^{\frac{1}{k}-\frac{59}{12}}\bigintsss_{\phi }^{+\infty}\frac{d\alpha}{{\alpha}^7}
\ll {\tau}^2 L^{-1} X^{\frac{13}{12}+\frac{1}{k}}X^{-\frac{3}{k}}
=o({\tau}^2 L^{-1} X^{\frac{13}{12}+\frac{1}{k}}).
$$
Hence, we get 
$$
J_1=\kappa L^{-1}\bigintsss_{\mathbb {R} }T_2(\lambda _1\alpha )\prod\limits_{j=2}^5T_3(\lambda _j\alpha )T_4(\lambda _6\alpha )T_k(\lambda _7\alpha )K_\tau (\alpha )e(-\upsilon \alpha )d\alpha + o({\tau}^2 L^{-1} X^{\frac{13}{12}+\frac{1}{k}}).
$$
For convenience, we write \textbf{y} = $(y_1, y_2,...,y_7).$ Changing the order of integration and substitution variables, we have
\begin{eqnarray}
& &\bigintsss_{\mathbb {R} }T_2(\lambda _1\alpha )\prod\limits_{j=2}^5T_3(\lambda _j\alpha )T_4(\lambda _6\alpha )T_k(\lambda _7\alpha )K_\tau (\alpha )e(-\upsilon \alpha )d\alpha \notag \\
&=&\frac{1}{648k(\lambda_1)^\frac{1}{2}(\lambda_2\lambda_3\lambda_4\lambda_5)^\frac{2}{3}(\lambda_6)^\frac{3}{4}(\lambda_7)^\frac{1-k}{k}}\bigintsss(y_1)^{-\frac{1}{2}}(y_2y_3y_4y_5)^{-\frac{2}{3}}(y_6)^{-\frac{3}{4}}(y_7)^{\frac{1-k}{k}}\notag \\
 &  &\times\bigintsss_{\mathbb {R} }e\bigg(\Big(\sum\limits_{j=1}^7y_j-\upsilon\Big)\alpha \bigg)K_\tau(\alpha)d\alpha d\textbf{y}\notag \\
&\gg&\bigintsss(y_1)^{-\frac{1}{2}}(y_2y_3y_4y_5)^{-\frac{2}{3}}(y_6)^{-\frac{3}{4}}(y_7)^{\frac{1-k}{k}}\hat{K} _\tau\bigg(\sum\limits_{j=1}^7y_j-\upsilon \bigg)d\textbf{y},\notag
\end{eqnarray}           
where the domain of integration for $\textbf{y}=(y_1, y_2,...,y_7)$ are $[\lambda_i\eta X,\lambda_i X]$, $1\leq i\leq 7$, respectively, and the integral satisfies $\left|\sum\limits_{j=1}^7y_j-\upsilon\right|<\tau.$ Taking $\left|\sum\limits_{j=1}^7y_j-\upsilon\right|<\frac{\tau}{2}$, and noting that $\lambda_7\eta X\leqslant y_7\leqslant\lambda_7 X$, then we can get a lower bound for the last integral
\begin{eqnarray}
&\gg&\tau X^{\frac{1-k}{k}}\bigintsss_{\lambda_1\eta X}^{\lambda_1 X}\cdot\cdot\cdot \bigintsss_{\lambda_6\eta X}^{\lambda_6 X}\bigintsss_{\sum\limits_{j=1}^6y_j-\upsilon-\frac{\tau}{2}}^{\sum\limits_{j=1}^6y_j-\upsilon+\frac{\tau}{2}}(y_1)^{-\frac{1}{2}}(y_2y_3y_4y_5)^{-\frac{2}{3}}(y_6)^{-\frac{3}{4}}\notag\\ & & \times \hat{K} _\tau \bigg(\sum\limits_{j=1}^7y_j-\upsilon\bigg)dy_1\cdot\cdot\cdot dy_7  \notag \\
&\gg&\tau^2 X^{\frac{13}{12}+\frac{1}{k}},\notag
\end{eqnarray} 
which means that
$$
\bigintsss_{\mathbb {R} }T_2(\lambda _1\alpha )\prod\limits_{j=2}^5T_3(\lambda _j\alpha )T_4(\lambda _6\alpha )T_k(\lambda _7\alpha )K_\tau (\alpha )e(-\upsilon \alpha )d\alpha \notag \\
\gg\tau^2 X^{\frac{13}{12}+\frac{1}{k}}.
$$
Thus we have $J_1\gg\tau^2 L^{-1}X^{\frac{13}{12}+\frac{1}{k}}.$
\subsection{Upper bound for $J_2$}
By (2.3), we get
$$
S_2(\lambda _1\alpha ,\rho )-\kappa L^{-1}T_2(\lambda _1\alpha)\ll X^{\frac{1}{2}}L^{-2}\left ( 1+\left | \alpha  \right |X  \right ). 
$$
So we have
\begin{eqnarray}
J_2 &=&\bigintsss_{\mathfrak{M}^{\textasteriskcentered} }(S_2(\lambda _1\alpha ,\rho )-\kappa L^{-1}T_2(\lambda _1\alpha))\prod\limits_{j=2}^5T_3(\lambda _j\alpha)T_4(\lambda _6\alpha )
T_k(\lambda _7\alpha )K_\tau (\alpha )e(-\upsilon \alpha )d\alpha \notag \\
   &\ll& \tau ^2\bigintsss_{\mathfrak{M}^{\textasteriskcentered}} \left |(S_2(\lambda _1\alpha ,\rho )-\kappa L^{-1}T_2(\lambda _1\alpha))  \right |\left |\prod\limits_{j=2}^5T_3(\lambda _j\alpha)\right |\left | T_4(\lambda _6\alpha ) \right | \left | T_k(\lambda _7\alpha ) \right | d\alpha\notag\\
   &\ll&\tau ^2\bigintsss_{\mathfrak{M}^{\textasteriskcentered} } \left |X^{\frac{1}{2}}L^{-2}\left ( 1+\left | \alpha  \right |X  \right )  \right |\left |\prod\limits_{j=2}^5T_3(\lambda _j\alpha)\right |\left | T_4(\lambda _6\alpha ) \right | \left | T_k(\lambda _7\alpha ) \right | d\alpha\notag \\
   &\ll&\tau ^2L^{-2}X^{\frac{1}{2}}\bigintsss_{0}^{\frac{1}{X} }\left |\prod\limits_{j=2}^5T_3(\lambda _j\alpha)\right |\left | T_4(\lambda _6\alpha ) \right | \left | T_k(\lambda _7\alpha ) \right | d\alpha \notag \\
   &  &+\tau ^2L^{-2}X^{\frac{1}{2}} \bigintsss_{\frac{1}{X} }^{\frac{L}{X} }\left |\prod\limits_{j=2}^5T_3(\lambda _j\alpha)\right |\left | T_4(\lambda _6\alpha ) \right | \left | T_k(\lambda _7\alpha )  \right |  d\alpha \notag \\
   &  &+\tau ^2L^{-2}X^{\frac{3}{2}} \bigintsss_{\frac{1}{X} }^{\frac{L}{X} }\left |\alpha \right |\left |\prod\limits_{j=2}^5T_3(\lambda _j\alpha)\right |\left | T_4(\lambda _6\alpha ) \right | \left | T_k(\lambda _7\alpha )  \right |  d\alpha \notag \\
   &  &+\tau ^2L^{-2}X^{\frac{1}{2}} \bigintsss_{\frac{L}{X} }^{\phi }\left |\prod\limits_{j=2}^5T_3(\lambda _j\alpha)\right |\left | T_4(\lambda _6\alpha ) \right | \left | T_k(\lambda _7\alpha )  \right |  d\alpha \notag \\
   &  &+\tau ^2L^{-2}X^{\frac{3}{2}} \bigintsss_{\frac{L}{X} }^{\phi }\left |\alpha \right |\left |\prod\limits_{j=2}^5T_3(\lambda _j\alpha)\right |\left | T_4(\lambda _6\alpha ) \right | \left | T_k(\lambda _7\alpha )  \right |  d\alpha \notag \\
   &\ll& {\tau}^2 L^{-2} X^{\frac{13}{12}+\frac{1}{k}}\notag\\
   &=&o ({\tau}^2 L^{-1} X^{\frac{13}{12}+\frac{1}{k}}).\notag
\end{eqnarray}
\subsection{Upper bounds for $J_3,J_4,J_5,J_6$}
In order to obtain the bound for $J_3$, by Euler's summation formula, we have
$$
U_j(\lambda\alpha)-T_j(\lambda\alpha)\ll 1+ \left |\alpha\right |X,\enspace j\geqslant 3.
$$
Using (2.1), we have
\begin{eqnarray}
J_3&=& \bigintsss_{\mathfrak{M}^{\textasteriskcentered} }S_2(\lambda _1\alpha ,\rho )(S_3(\lambda _2\alpha )-T_3(\lambda _2\alpha ))\prod\limits_{j=3}^5T_3(\lambda _j\alpha)T_4(\lambda _6\alpha )T_k(\lambda _7\alpha )K_\tau (\alpha )e(-\upsilon \alpha )d\alpha  \notag\\
   &\ll&\tau ^2\bigintsss_{\mathfrak{M}^{\textasteriskcentered} }\left |S_2(\lambda _1\alpha ,\rho )\right |\left | S_3({\lambda_2 \alpha   })- T_3({\lambda_2 \alpha   }) \right | \left |\prod\limits_{j=3}^5T_3(\lambda _j\alpha)\right |\left |T_4(\lambda _6\alpha )\right |\left |T_k(\lambda _7\alpha )\right |d\alpha  \notag\\
    &\ll&\tau ^2\bigintsss_{\mathfrak{M}^{\textasteriskcentered} } \left | S_2({\lambda_1 \alpha  ,\rho  })  \right | \left| S_3({\lambda_2 \alpha   })- U_3({\lambda_2 \alpha   })\right| \left |\prod\limits_{j=3}^5T_3(\lambda _j\alpha)  \right |\left| T_4(\lambda _6\alpha )\right|\left |T_k(\lambda _7\alpha )\right|d\alpha \notag\\
    & &+ \tau ^2\bigintsss_{\mathfrak{M}^{\textasteriskcentered} }\left | S_2({\lambda_1 \alpha  ,\rho  })  \right | \left| U_3({\lambda_2 \alpha   })- T_3({\lambda_2 \alpha   })\right| \left |\prod\limits_{j=3}^5T_3(\lambda _j\alpha)  \right |\left| T_4(\lambda _6\alpha )\right|\left |T_k(\lambda _7\alpha )\right|d\alpha \notag\\
    &=&\tau   ^2(A_1+B_1).\notag
\end{eqnarray}
By Cauchy's inequality and Lemma 2.3, we obtain
\begin{eqnarray}
A_1&\ll& X^{\frac{1}{k}+1}\bigg(\bigintsss_{\mathfrak{M}^{\textasteriskcentered}}\left | S_2({\lambda_1 \alpha,\rho}) \right |^4d\alpha    \bigg)^{\frac{1}{4}} \bigg(\bigintsss_{\mathfrak{M}^{\textasteriskcentered} }\left | S_3({\lambda_2 \alpha})- U_3({\lambda_2 \alpha}) \right |^2d\alpha \bigg )^{\frac{1}{2}}\notag \\
&  &\times \bigg(\bigintsss_{\mathfrak{M}^{\textasteriskcentered} }\left | T_4({\lambda_6 \alpha}) \right |^4d\alpha \bigg)^{\frac{1}{4}} \notag \\
   &\ll_A& X^{\frac{13}{12}-\frac{3}{2k}}(\log{X} )^{-\frac{A}{2}}.\notag\\
B_1&\ll&\bigintsss_{0}^{\frac{1}{X}} \left |S_2({\lambda_1 \alpha  ,\rho  })  \right | \left |\prod\limits_{j=3}^5T_3(\lambda _j\alpha) \right |\left| T_4(\lambda _6\alpha )\right|\left |T_k(\lambda _7\alpha )\right|d\alpha\notag\\
   & &+X\bigintsss_{\frac{1}{X}}^{\phi}\left |\alpha\right | \left |S_2({\lambda_1 \alpha  ,\rho  })  \right | \left |\prod\limits_{j=3}^5T_3(\lambda _j\alpha) \right |\left| T_4(\lambda _6\alpha )\right|\left |T_k(\lambda _7\alpha )\right|d\alpha\notag\\
   &\ll& X^{\frac{3}{4}+\frac{1}{k}+\frac{1}{k}-2\varepsilon}L^{-1}. \notag
\end{eqnarray}
For $k\geqslant5$, we have
$$
B_1\ll X^{\frac{19}{20}+\frac{1}{k}-2\varepsilon}L^{-1},
$$
and
$$
J_3= o ({\tau}^2 L^{-1} X^{\frac{13}{12}+\frac{1}{k}}).
$$
As for $J_4$, $J_5$, $J_6$, we use a similar approach and get
$$
J_i= o ({\tau}^2 L^{-1} X^{\frac{13}{12}+\frac{1}{k}}),\enspace i=4,5,6.
$$
\subsection{Upper bounds for $J_7,J_8$}
The computation for $J_7$ and $J_8$ are similar to that for $J_3$, $J_4$. Also by (2.1), we have
\begin{eqnarray}
	J_7&=& \bigintsss_{\mathfrak{M}^{\textasteriskcentered} }S_2(\lambda _1\alpha ,\rho )\prod\limits_{j=2}^5S_3(\lambda _j\alpha )(S_4(\lambda _6\alpha )-T_4(\lambda _6\alpha ))T_k(\lambda _7\alpha )K_\tau (\alpha )e(-\upsilon \alpha )d\alpha \nonumber\\ \notag\\
	&\ll&\tau ^2\bigintsss_{\mathfrak{M}^{\textasteriskcentered} }\left |S_2(\lambda _1\alpha ,\rho )\right | \left |\prod\limits_{j=2}^5 S_3({\lambda_j \alpha   })\right |\left |S_4(\lambda _6\alpha )-T_4(\lambda _6\alpha )\right |\left |T_k(\lambda _7\alpha )\right |d\alpha  \notag\\
	&\ll&\tau ^2\bigintsss_{\mathfrak{M}^{\textasteriskcentered} } \left |S_2(\lambda _1\alpha ,\rho )\right |\left |\prod\limits_{j=2}^5 S_3({\lambda_j \alpha   })\right | \left |S_4(\lambda _6\alpha )-U_4(\lambda _6\alpha )\right | \left |T_k(\lambda _7\alpha )\right |d\alpha \notag\\
	& &+ \tau ^2\bigintsss_{\mathfrak{M}^{\textasteriskcentered} } \left |S_2(\lambda _1\alpha ,\rho )\right |\left |\prod\limits_{j=2}^5 S_3({\lambda_j \alpha   })\right | \left |U_4(\lambda _6\alpha )-T_4(\lambda _6\alpha )\right |\left |T_k(\lambda _7\alpha )\right |d\alpha \notag\\
	&=&\tau   ^2(A_2+B_2). \notag
\end{eqnarray}
By Cauchy's inequality and Lemma 2.3, we obtain
\begin{eqnarray}
A_2&\ll& X^{\frac{1}{k}+\frac{1}{2}} \bigg(\bigintsss_{\mathfrak{M}^{\textasteriskcentered} }\left | S_4({\lambda_6 \alpha})- U_4({\lambda_6 \alpha}) \right |^2d\alpha \bigg )^{\frac{1}{2}}\prod\limits_{j=2}^5\bigg(\bigintsss_{\mathfrak{M} }\left | S_3(\lambda _j\alpha) \right |^8d\alpha \bigg)^{\frac{1}{8}} \notag \\
&\ll_A& X^{\frac{13}{12}+\frac{1}{k}}(\log{X} )^{-\frac{A}{2}}.\notag\\
B_2&\ll&\bigintsss_{0}^{\frac{1}{X}} \left |S_2({\lambda_1 \alpha  ,\rho  })  \right | \left |\prod\limits_{j=2}^5S_3(\lambda _j\alpha) \right |\left |T_k(\lambda _7\alpha )\right|d\alpha\notag\\
& &+  X\bigintsss_{\frac{1}{X}}^{\phi}\left |\alpha\right |\left |S_2({\lambda_1 \alpha  ,\rho  })  \right |  \left |\prod\limits_{j=2}^5S_3(\lambda _j\alpha) \right |\left |T_k(\lambda _7\alpha )\right|d\alpha\notag\\
&\ll& X^{\frac{5}{6}+\frac{1}{k}+\frac{1}{k}-2\varepsilon}L^{-1}.\notag
\end{eqnarray}
For $k\geqslant5$, we have
$$
B_2\ll X^{\frac{31}{30}+\frac{1}{k}-2\varepsilon}L^{-1},
$$
and
$$
J_7= o ({\tau}^2 L^{-1} X^{\frac{13}{12}+\frac{1}{k}}).
$$
Similarly,
\begin{eqnarray}
	J_8&=& \bigintsss_{\mathfrak{M}^{\textasteriskcentered} }S_2(\lambda _1\alpha ,\rho )\prod\limits_{j=2}^5S_3(\lambda _j\alpha)S_4(\lambda _6\alpha )(S_k(\lambda _7\alpha )-T_k(\lambda _7\alpha ))K_\tau (\alpha )e(-\upsilon \alpha )d\alpha \notag\\
	&\ll&\tau ^2\bigintsss_{\mathfrak{M}^{\textasteriskcentered} }\left |S_2(\lambda _1\alpha ,\rho )\right | \left |\prod\limits_{j=2}^5S_3(\lambda _j\alpha) \right |\left |S_4(\lambda _6\alpha )\right |\left |S_k(\lambda _7\alpha )-T_k(\lambda _7\alpha )\right |d\alpha  \notag\\
	&\ll&\tau ^2\bigintsss_{\mathfrak{M}^{\textasteriskcentered} } \left |S_2(\lambda _1\alpha ,\rho )\right | \left |\prod\limits_{j=2}^5S_3(\lambda _j\alpha) \right|\left |S_4(\lambda _6\alpha )\right |\left |S_k(\lambda _7\alpha )-U_k(\lambda _7\alpha )\right |d\alpha \notag\\
	& &+ \tau ^2\bigintsss_{\mathfrak{M}^{\textasteriskcentered} } \left |S_2(\lambda _1\alpha ,\rho )\right | \left |\prod\limits_{j=2}^5S_3(\lambda _j\alpha) \right |\left |S_4(\lambda _6\alpha )\right |\left |U_k(\lambda _7\alpha )-T_k(\lambda _7\alpha )\right |d\alpha \notag\\
	&=&\tau   ^2(A_3+B_3). \notag
\end{eqnarray}
By Cauchy's inequality and Lemma 2.3, we obtain
\begin{eqnarray}
	A_3&\ll& X^{\frac{1}{2}+\frac{1}{4}} \bigg(\bigintsss_{\mathfrak{M}^{\textasteriskcentered} }\left | S_k({\lambda_7 \alpha})- U_k({\lambda_7 \alpha}) \right |^2d\alpha \bigg )^{\frac{1}{2}}\prod\limits_{j=2}^5\bigg(\bigintsss_{\mathfrak{M}^{\textasteriskcentered} }\left |S_3(\lambda _j\alpha) \right |^8d\alpha \bigg)^{\frac{1}{8}} \notag \\
	&\ll_A&  X^{\frac{13}{12}+\frac{1}{k}}(\log{X} )^{-\frac{A}{2}}.\notag\\
	B_3&\ll&\bigintsss_{0}^{\frac{1}{X}} \left |S_2({\lambda_1 \alpha  ,\rho  })  \right | \left|\prod\limits_{j=2}^5S_3(\lambda _j\alpha) \right| \left|S_4(\lambda _6\alpha )\right|d\alpha\notag\\
	& &+X\bigintsss_{\frac{1}{X}}^{\phi}\left |\alpha\right |\left |S_2({\lambda_1 \alpha  ,\rho  })  \right | \left|\prod\limits_{j=2}^5S_3(\lambda _j\alpha) \right|\left |S_4(\lambda _6\alpha )\right|d\alpha\notag\\
	&\ll&X^{\frac{13}{12}+\frac{1}{k}-2\varepsilon}L^{-1}.\notag 
\end{eqnarray}
So we have
$$
J_8= o ({\tau}^2 L^{-1} X^{\frac{13}{12}+\frac{1}{k}}).
$$
Finally, we have $J_1\gg{\tau}^2 L^{-1} X^{\frac{13}{12}+\frac{1}{k}}$ and $J_i= o ({\tau}^2 L^{-1} X^{\frac{13}{12}+\frac{1}{k}})$, $i=2, 3,4,5,6,7,8.\notag\\$
Therefore
\begin{eqnarray}
  I(\tau ,\upsilon ,\mathfrak{M}^{\textasteriskcentered} ,\rho )\gg {\tau}^2 L^{-1} X^{\frac{13}{12}+\frac{1}{k}}.
\end{eqnarray}
In order to handle $\mathfrak{M}\setminus\mathfrak{M}^*$,
we need the following lemmas.
\begin{lem}
	Let $\alpha$ be a real number, and there exist $a \in \mathbb{Z}$, $q \in \mathbb{N}$ with $(a,\enspace q)= 1$ and $\left| \alpha-a/q \right| < q^{-2}.$ Let $k$ be a positive integer with $k\geq 2.$ Then for any $\varepsilon > 0,$ one has
\begin{align}
	\sum_{1\leq p \leq N} (\log p)e(\alpha p^k)\ll N^{1+\varepsilon}(\frac{1}{q}+\frac{1}{N^{1/2}}+\frac{q}{N^k})^{4^{1-k}}.
\end{align}

\end{lem}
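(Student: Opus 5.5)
This is Harman's Lemma (Kumchev-type Weyl estimate for prime exponential sums), and I would prove it by combining Vaughan's identity with Weyl differencing. The plan is to write
\[
\sum_{1\le p\le N}(\log p)e(\alpha p^k)=\sum_{n\le N}\Lambda(n)e(\alpha n^k)+O(N^{1/2}),
\]
where the error counts prime powers. I then decompose $\Lambda$ by Vaughan's identity with parameters $U=V=N^{1/2}$ (or slightly adjusted). This produces type I sums
\[
\sum_{m\le M}a_m\sum_{\ell\le N/m}e(\alpha (m\ell)^k)
\]
with $|a_m|\ll N^\varepsilon$ and $M\le N^{1/2}$, and type II sums
\[
\sum_{m\sim M}\sum_{\ell}a_m b_\ell e(\alpha(m\ell)^k)
\]
with $U\le M\le N/V$. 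Each is split dyadically, at a cost of a factor $\log N$.

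For the type I sums, the inner sum over $\ell$ is a Weyl sum for the polynomial $\alpha m^k\ell^k$. I apply Weyl's inequality after $k-1$ differencings, then sum over $m$. The $m$-average of the resulting reciprocal sums $\sum_h\min(L,\|\alpha m^k k!h\|^{-1})$ is controlled with the rational approximation $a/q$, using the standard lemma on $\sum_{x\le X}\min(Y,\|\alpha x\|^{-1})$. The products $m^k h_1\cdots h_{k-1}$ are grouped with a divisor-function bound, costing $N^\varepsilon$. This gives a saving of the form
\[
N^{1+\varepsilon}\left(q^{-1}+N^{-1/2}+qN^{-k}\right)^{2^{1-k}}.
\]
That is stronger than needed.

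For the type II sums, I first apply Cauchy--Schwarz over $m$ to remove $a_m$, which leaves
\[
M\sum_{\ell_1,\ell_2}b_{\ell_1}\overline{b_{\ell_2}}\sum_m e\!\left(\alpha m^k(\ell_1^k-\ell_2^k)\right).
\]
A second Cauchy--Schwarz (squaring again) followed by $k-1$ Weyl differencings in $m$ reduces this to a linear exponential sum in $m$ with coefficient $\alpha(\ell_1^k-\ell_2^k)h_1\cdots h_{k-1}k!$. I then bound the result via the reciprocal-sum lemma and the divisor bound for representations of integers as $(\ell_1^k-\ell_2^k)h_1\cdots h_{k-1}$. Each Cauchy--Schwarz or differencing step halves the exponent of the saving; the total is $2+2(k-1)=2k$ halvings in the worst accounting, which still gives at least exponent $4^{1-k}$ since $2^{-2(k-1)}=4^{1-k}$. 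The range $M\in[N^{1/2},N^{1/2}]$ (or near it) keeps both $M$ and $N/M$ at least $N^{1/2}$, which is where the term $N^{-1/2}$ comes from.

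The main obstacle is the bookkeeping of the type II case. There I must check that the diagonal terms $\ell_1=\ell_2$ and $h_i=0$ contribute at most $N^{2}\cdot N^{-1/2}$ to the squared quantity. I also need the counting of the differenced polynomial's coefficient to be uniform, so that the combined saving is exactly $(q^{-1}+N^{-1/2}+qN^{-k})^{4^{1-k}}$. If this proves delicate, I would simply quote the result as stated in Harman's ``Trigonometric sums over primes I'' (Theorem 1), where this exact bound appears.
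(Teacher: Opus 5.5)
The paper does not prove this lemma; it simply quotes Harman, \emph{Trigonometric sums over primes I}. Your closing fallback (cite Harman's theorem) is therefore exactly the paper's route, and it is the appropriate way to settle the lemma. The self-contained Vaughan--Weyl sketch in front of it does not prove the stated bound, however, and it fails at the step you treat as routine, ``reciprocal-sum lemma plus divisor bound''. The divisor bound only controls multiplicities. The estimate $\sum_{r\le R}\min(Y,\|\alpha r\|^{-1})\ll (R/q+1)(Y+q\log q)$ is efficient only if the integers $r$ being counted essentially fill $[1,R]$.

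In your type II sum, the coefficients $k!\,(\ell_1^k-\ell_2^k)h_1\cdots h_{k-1}$ (with $\ell_i\sim L$, $|h_i|\le M$) come from only $\asymp L^2M^{k-1}$ tuples but spread over a range $R\asymp L^kM^{k-1}$. Against the trivial bound $L^2M^{k}$, the lemma yields a relative factor of order $L^{k-2}(q^{-1}+M^{-1})$. For $L\asymp M\asymp N^{1/2}$ this is at least of order $N^{(k-3)/2}$, so there is no saving at all once $k\ge 3$.

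Your type I treatment has the same defect. The integers $k!\,m^kh_1\cdots h_{k-1}$ (with $m\sim M$, $|h_i|\le L=N/M$) number $\asymp ML^{k-1}$ in a range $\asymp M^kL^{k-1}$, giving a relative factor $\asymp M^{k-1}(q^{-1}+L^{-1})$. This is trivial at $M\asymp N^{1/2}$, so the claimed type I saving with exponent $2^{1-k}$ does not follow from your argument.

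To make the reciprocal-sum lemma bite, the differencing must be arranged so that the final linear coefficient is a product of essentially free variables filling its range. For instance, after differencing in $m$, rewrite the $(\ell_1,\ell_2)$-sum as $|\sum_\ell e(\beta\ell^k)|^2$ and Weyl-difference in $\ell$ as well. That costs $1+(k-1)+(k-2)=2k-2$ halvings, which is consistent with the exponent $4^{1-k}$.

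The bookkeeping is also off in two places:
\begin{itemize}
\item By your own count, $2+2(k-1)=2k$ halvings give the exponent $2^{-2k}=4^{-k}$, not $4^{1-k}$. So even granting every step, you would land on a weaker bound than the lemma asserts.
\item Vaughan's identity with $U=V=N^{1/2}$ does not produce type I sums with $M\le N^{1/2}$ plus type II sums near $M=N^{1/2}$. Its type II range $V<m\le N/U$ is empty, while its type I sums run over $m\le UV=N$. Your explanation of where the term $N^{-1/2}$ comes from therefore does not hold as written.
\end{itemize}
The paper uses this lemma only as a black box (in the corollary that follows it), so citing Harman is the right proof here. A self-contained argument would have to reproduce Harman's actual treatment of the bilinear sums, not the sketch above.
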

\begin{proof}
	The lemma is from Harman \cite{888}.
\end{proof}
\begin{cor}
	Suppose that $k\geq4$ and $X^{-1+\frac{1}{2k}-\varepsilon} < \alpha \leq X^{-\frac{3}{4}}.$ Then 
	\begin{align}
		S_k(\lambda \alpha)\ll X^{\frac{1}{k}-\frac{4^{1-k}}{2k}}
	\end{align}
\end{cor}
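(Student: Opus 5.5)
We apply Lemma 3.1 with $N = X^{1/k}$ and $\beta=\lambda\alpha$; replacing $\alpha$ by $|\alpha|$ if needed, we may take $\alpha>0$. The sum $S_k(\lambda\alpha)$ runs over $p\in I_4=[(\eta X)^{1/k},X^{1/k}]$, so it is the difference of two complete sums of the type in Lemma 3.1, with $N=X^{1/k}$ and $N=(\eta X)^{1/k}$. Both have size $\asymp X^{1/k}$, so it suffices to bound a single sum $\sum_{p\le N}(\log p)e(\beta p^k)$ with $N\asymp X^{1/k}$.

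The key step is to choose a rational approximation to $\beta$ by hand, since $\beta$ is tiny. Take $a=0$ and $q=\lfloor \beta^{-1/2}\rfloor$, or more safely any $q$ with $1\le q<\beta^{-1/2}$, chosen as large as possible. Then $|\beta-0/q|=\beta<q^{-2}$ and $(0,q)=1$ requires $q=1$. This is the obstacle: with $a=0$ the coprimality condition forces $q=1$.

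To get around it, I would use Dirichlet's theorem instead. Choose $Q$ and find $a,q$ with $(a,q)=1$, $1\le q\le Q$ and $|\beta-a/q|\le 1/(qQ)<q^{-2}$. Since $0<\beta\le|\lambda|X^{-3/4}$ is small, taking $Q$ with $1/Q$ comparable to $\beta$ gives the following. If $a\neq 0$, then $1/q\le|a/q|\le\beta+1/(qQ)$. The cleaner route is to take $Q=\lfloor 1/(2\beta)\rfloor$; then $a=0$ would force $\beta\le 1/(qQ)$, i.e.\ $q\le 1/(\beta Q)\approx 2$. I would therefore instead apply Lemma 3.1 with $a=1$ and $q=\lfloor 1/\beta\rfloor$ (or $\lceil 1/\beta\rceil$), which are coprime. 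Then $|\beta-1/q|\le \beta^2\asymp q^{-2}$, and adjusting by a constant (taking $q$ the nearest integer to $1/\beta$ so that $|\beta-1/q|\le \beta/(2q)$, which is $<q^{-2}$ since $\beta q\le 1+\beta<2$) makes the hypothesis hold. Thus $q\asymp \beta^{-1}$, so $X^{3/4}\ll q\ll X^{1-\frac1{2k}+\varepsilon}$.

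Now insert this into Lemma 3.1 with $N^k\asymp X$:
\[
\frac1q\ll X^{-3/4},\qquad \frac1{N^{1/2}}\asymp X^{-\frac1{2k}},\qquad \frac q{N^k}\ll X^{-\frac1{2k}+\varepsilon}.
\]
For $k\ge4$ we have $\frac1{2k}\le\frac18<\frac34$, so the bracket is $\ll X^{-\frac1{2k}+\varepsilon}$. Raising to the power $4^{1-k}$ gives
\[
S_k(\lambda\alpha)\ll X^{\frac1k+\varepsilon}\,X^{-\frac{4^{1-k}}{2k}+\varepsilon 4^{1-k}}.
\]
The stray $X^{\varepsilon}$ factors are harmless: the lower endpoint of the $\alpha$-range already carries an $\varepsilon$ loss, and under the paper's convention that $\varepsilon$ may vary, they are absorbed into the stated bound $X^{\frac1k-\frac{4^{1-k}}{2k}}$ read with the usual $\varepsilon$-flexibility. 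The step needing the most care is the choice of $(a,q)=(1,q)$ with $q$ near $1/\beta$ so that both $1/q$ and $q/N^k$ are controlled at the two ends of the $\alpha$-range. The upper endpoint $X^{-3/4}$ controls $1/q$, and the lower endpoint $X^{-1+\frac1{2k}-\varepsilon}$ controls $q/N^k$.
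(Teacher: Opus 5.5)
Your proposal is correct and is the paper's argument: apply Harman's bound (Lemma 3.1) with $a=1$ and $q$ an integer near $1/(\lambda\alpha)$ (the paper's $q=[|\lambda\alpha^{-1}|]$ is meant as $[|\lambda\alpha|^{-1}]$). Then $1/q\ll X^{-3/4}$, $q/N^k\ll X^{-\frac{1}{2k}+\varepsilon}$, and the term $N^{-1/2}=X^{-\frac{1}{2k}}$ dominates. You are right that this gives the bound only up to a factor $X^{\varepsilon}$; the paper's one-line deduction has the same loss, and it is harmless where the corollary is used in (3.7).
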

\begin{proof}
	We can take $q= [\left|\lambda \alpha^{-1} \right| ],\ a=1$ in (3.5), so that (3.6) can be deduced from (3.5). 
\end{proof}
\begin{lem}
	 We have
$$
\bigintsss_{\left|\alpha\right|\leq X^{-\frac{2}{3}}  }\left|S_2 (\lambda \alpha) \right| ^2 d\alpha \ll 1.
$$
	
\end{lem}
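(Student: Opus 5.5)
The plan is to use Gallagher's lemma, which turns a mean square of an exponential sum over a short interval into a mean square of short-interval counts. A direct expansion of $|S_2|^2$ would cost a factor $L$ from the off-diagonal terms, so Gallagher's lemma is the better route. Write $S_2(\lambda\alpha)$ for the sum over $m\in I_1$ with weights $a_m$ satisfying $|a_m|\le 1$; this covers $\rho(m)$, and with $\log p$ weights one only loses a harmless power of $L$ in the same argument. Changing variables $\beta=\lambda\alpha$ with $\lambda$ fixed, it suffices to bound
$$\int_{|\beta|\le Y}\Big|\sum_{n\in I_1^2} b_n e(n\beta)\Big|^2 d\beta ,\qquad Y=|\lambda|X^{-2/3},$$
where $b_n=a_m$ if $n=m^2$ with $m\in I_1$, and $b_n=0$ otherwise.

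Gallagher's lemma (Montgomery, \emph{Topics}, Lemma 1.9) gives
$$\int_{|\beta|\le Y}\Big|\sum_n b_n e(n\beta)\Big|^2 d\beta \ll Y^2\int_{\mathbb{R}}\Big|\sum_{x<n\le x+1/(2Y)} b_n\Big|^2 dx .$$
The inner sum vanishes unless $x\in[\eta X-1/(2Y),X]$, a range of length $\ll X$, because $1/(2Y)\asymp X^{2/3}\le X$.

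Next I count the squares $m^2$ in $(x,x+H]$, where $H=1/(2Y)\asymp X^{2/3}$ and $x\asymp X$. The number of such $m$ is at most $\sqrt{x+H}-\sqrt{x}+1\ll H X^{-1/2}+1\ll X^{1/6}$. Hence the inner sum is $\ll X^{1/6}$, its square is $\ll X^{1/3}$, and the whole expression is
$$\ll Y^2\cdot X\cdot X^{1/3}\ll X^{-4/3}X^{4/3}=1.$$
This is the claimed bound.

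The only delicate point is the final bookkeeping. The exponent $-2/3$ is exactly where the short-interval length $X^{2/3}$, translated into $m$-length $X^{1/6}$, balances the factor $Y^2X$. For $x$ near the bottom of the range, $x+H$ may dip below $\eta X$, but this only shrinks the count, and for $x<0$ the inner sum is empty. I would also check that in the prime-weighted version the extra $L^2$ is acceptable where the lemma is applied later. If the lemma is used with the weight $\rho$, as in the definition of $S_2(\alpha,\rho)$, no logarithm appears at all.
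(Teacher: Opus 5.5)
Your proof is correct. The paper gives no argument of its own for this lemma: it cites Ge--Zhao and asserts that the same reasoning survives the sieve weight $\rho$. Your route through Gallagher's lemma is the same device behind Lemma 2.1; the term $Y^2J_r(X,1/(2Y))$ there is exactly Gallagher's bound. Your argument is self-contained and actually substantiates the paper's assertion, because the only fact about the coefficients it uses is that they are bounded. No prime number theorem, no use of (2.3), and no cancellation in $\rho$ enters. Counting squares in windows of length $X^{2/3}$ gives $Y^2\cdot X\cdot (X^{1/6})^2\ll 1$, as you say.

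The same computation gives $\ll 1+Y^2X$ for any $Y\ge X^{-1}$, so the bound holds for $|\alpha|\le X^{-1/2}$. This comfortably covers $\mathfrak{M}=[-X^{-3/4-\varepsilon},X^{-3/4-\varepsilon}]$, where the lemma is used in (3.7). Your remark that a naive expansion with a sharp cutoff loses a factor $L$ is also accurate; Gallagher's lemma (or a Fej\'er-type majorant) is what avoids it.

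One small correction: the construction of $\rho$ only gives $\rho(m)\le 1$, not $|\rho(m)|\le 1$. In fact $\rho(m)$ can equal $-2$ for suitable $m\in I_1$ with three prime factors. What holds, and is all you need, is $-4\le \rho(m)\le 1$ for $m\le X^{1/2}$. Only primes $p\mid m$ with $p\ge X^{5/42}$ contribute to the subtracted sum, and $m\le X^{1/2}$ has at most four of them. With $|a_m|\ll 1$ in place of $|a_m|\le 1$, your argument goes through unchanged.
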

\begin{proof}
	This lemma is from Ge and Zhao \cite{Ge}. Although there exists sieve function $\rho(m)$ in the definition of $S_2(\lambda_1\alpha)$, we can also by similar argument in \cite{Ge} to get this lemma.
\end{proof}

Then by Cauchy's inequality, we conclude that
\begin{eqnarray}
& &\bigintsss_{\mathfrak{M}\setminus\mathfrak{M}^*}\left|S_2(\lambda _1\alpha ,\rho )\prod\limits_{j=2}^5S_3(\lambda _j\alpha )S_4(\lambda _6\alpha )S_k(\lambda _7\alpha )\right|K_\tau (\alpha )d\alpha \notag\\
&\ll&\tau^{2}\left|S_k(\lambda _7\alpha )\right| \left|S_4(\lambda _6\alpha )\right|  \bigg(\bigintsss_{\left|\alpha\right|\leq X^{-\frac{2}{3}} }\left | S_2({\lambda_1 \alpha, \rho})\right |^2d\alpha \bigg )^{\frac{1}{2}}\notag\\
& &\prod\limits_{j=2}^5\bigg(\bigintsss_0^1 \left | S_3(\lambda _j\alpha) \right |^8d\alpha \bigg)^{\frac{1}{8}}\notag\\
&\ll&\tau^{2}X^{\frac{13}{12}+\frac{1}{k}-\frac{4^{1-k}}{2k}}\notag\\
&=&o ({\tau}^2 L^{-1} X^{\frac{13}{12}+\frac{1}{k}}).
\end{eqnarray}

 Therefore, combining (3.4) and (3.7), we have
\begin{eqnarray}
	I(\tau ,\upsilon ,\mathfrak{M} ,\rho )\gg {\tau}^2 L^{-1} X^{\frac{13}{12}+\frac{1}{k}}.
\end{eqnarray}
\section{The minor arc}
In this section, we estimate the contribution of the minor arc. We define
$$
{\mathfrak{m}}_1  =\left \{ \alpha \in \mathfrak{m}: \left | S_2({\lambda_1 \alpha,\rho   }) \right |\le X^{\frac{3}{7}+2\varepsilon}  \right \}
$$
and
$$
{\mathfrak{m}}_2  =\left \{ \alpha \in \mathfrak{m}: \left | S_3({\lambda_2 \alpha}) \right |\le X^{\frac{11}{36}+2\varepsilon}  \right \}. 
$$

\begin{lem}
For $0<\delta<1$, we have
$$
\bigintsss_{\mathbb{R} }\left | S_k(\lambda _7\alpha )  \right | ^{m(k)}K_\tau (\alpha )d\alpha \ll \tau X^{\frac{m(k)}{k}-\delta+\varepsilon  },
$$
where
$$
m(k)=
\begin{cases} 
	2\lceil (k\delta+1-\lfloor k\delta\rfloor)2^{\lfloor k\delta-1\rfloor}\rceil, & \text{if $\lfloor k\delta\rfloor\leq3$,}\\  
	2\lceil7k\delta-20\rceil, & \text{if $\lfloor  k\delta\rfloor=4$,}\\  
	2\lceil (k\delta-\lfloor k\delta\rfloor/2)(\lfloor k\delta\rfloor+1)\rceil, 
	& \text{if $\lfloor  k\delta\rfloor\geq5$.}
\end{cases}
$$
\end{lem}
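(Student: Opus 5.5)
The plan is to reduce the statement to a mean value estimate for the number of solutions of an equation in $k$-th powers of integers, and then invoke the known Hua-type and Vinogradov-type bounds. These are, respectively, Hua's lemma for small $j$, the bound $\int_0^1|f|^{2s}\ll N^{2s-j+\varepsilon}$ for $s$ in the range given by the $k\delta=4$ case (a Heath-Brown/Boklan type result), and Wooley's efficient congruencing/Vinogradov mean value theorem for larger $j$.

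\textbf{Step 1: reduction to integer counting.} Write $m=m(k)=2s$. Using $K_\tau(\alpha)\ll\min(\tau^2,|\alpha|^{-2})$, split $\mathbb{R}$ into unit intervals $[n,n+1]$. For each $n$,
$$\int_n^{n+1}|S_k(\lambda_7\alpha)|^{2s}K_\tau(\alpha)\,d\alpha\ll \min(\tau^2,n^{-2})\int_n^{n+1}|S_k(\lambda_7\alpha)|^{2s}\,d\alpha .$$
Summing over $n$ gives a factor $\sum_n\min(\tau^2,n^{-2})\ll\tau$. It therefore suffices to show $\int_0^1|S_k(\lambda_7\alpha)|^{2s}\,d\alpha\ll X^{2s/k-\delta+\varepsilon}$ uniformly over unit intervals. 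Substituting $\beta=\lambda_7\alpha$ turns a unit interval into an interval of length $|\lambda_7|$. This is covered by $O(1)$ unit intervals, and by periodicity each contributes $\int_0^1|S_k(\beta)|^{2s}d\beta$.

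\textbf{Step 2: removing primes.} Expand the $2s$-th power of $|S_k|$. The weights $\log p\le L$ are absorbed into $X^{\varepsilon}$. Dropping primality only increases the count, so $\int_0^1|S_k(\beta)|^{2s}d\beta$ is at most $L^{2s}$ times the number of solutions of
$$x_1^k+\dots+x_s^k=y_1^k+\dots+y_s^k,\qquad x_i,y_i\le N=X^{1/k}.$$
The target becomes $\int_0^1|f(\beta)|^{2s}d\beta\ll N^{2s-k\delta+\varepsilon}$ with $f(\beta)=\sum_{n\le N}e(\beta n^k)$. In other words, we need the saving $N^{j}$ with $j=k\delta$ over the trivial bound $N^{2s}$.

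\textbf{Step 3: interpolation via Hölder.} Let $t=\lfloor k\delta\rfloor$. Take two known endpoint bounds of the form $\int_0^1|f|^{2u}\ll N^{2u-t+\varepsilon}$ and $\int_0^1|f|^{2v}\ll N^{2v-(t+1)+\varepsilon}$. For $t\le3$ these are Hua's lemma with $2u=2^{t}$ and $2v=2^{t+1}$. For $t=4$ they are the sharper bounds for $16$ and $14$-th-type moments, which give the linear expression $7k\delta-20$. For $t\ge5$ they are the Vinogradov mean value bounds $\int|f|^{t(t+1)}\ll N^{t(t+1)-t+\varepsilon}$, read off from Wooley's theorem with $2s=j(j+1)$ giving saving $j$.

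Apply Hölder with exponent weight $\theta=k\delta-t\in[0,1)$ between the two endpoints. This yields a bound on the $2s$-th moment for $2s\ge(1-\theta)2u+\theta 2v$ with saving $k\delta$. The moment is monotone in $s$ because $|f|\le N$, so rounding $s$ up (the ceiling in $m(k)$) is harmless. One checks that $(1-\theta)\,t(t+1)+\theta\,(t+1)(t+2)$, halved, equals $(k\delta-t/2)(t+1)$, matching the $\lfloor k\delta\rfloor\ge5$ formula. The analogous check gives $(k\delta+1-t)2^{t-1}$ for Hua's range.

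\textbf{Main obstacle.} The main difficulty is pinning down the correct endpoint inputs for each range, in particular the $t=4$ case. Those constants must come from a specific known 14/16-moment-type estimate, and I would cite it rather than reprove it.
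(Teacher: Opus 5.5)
Your reduction (unit intervals with $\sum_n\min(\tau^2,n^{-2})\ll\tau$, periodicity, dropping primality), the Hölder interpolation and the monotonicity-in-$s$ rounding are all sound. This is presumably how the cited lemma of Li and Wang is proved; the paper itself gives no argument beyond that citation. Your checks for $t=\lfloor k\delta\rfloor\le 3$ (Hua at $2^t$ and $2^{t+1}$) and for $t\ge 5$ (the bounds $\int_0^1|f|^{j(j+1)}\ll N^{j^2+\varepsilon}$ at $j=t,t+1$) are correct. Note that $\delta<1$ is what guarantees $t+1\le k$, which both inputs need.

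The case $\lfloor k\delta\rfloor=4$ fails as you describe it. Hölder between a moment $2u$ with saving $4$ and a moment $2v$ with saving $5$ gives saving $k\delta$ at exponent $2s=2u+(2v-2u)(k\delta-4)$. Matching $2(7k\delta-20)=16+14(k\delta-4)$ forces $2u=16$ and $2v=30$. So the endpoints are Hua's lemma, $\int_0^1|f|^{16}\ll N^{12+\varepsilon}$, and the $j=5$ instance of the same bound you already use for $t\ge5$, namely $\int_0^1|f|^{30}\ll N^{25+\varepsilon}$. This is available because $\lfloor k\delta\rfloor=4$ with $\delta<1$ forces $k\ge5$.

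The $14$ in $7k\delta-20$ is just $30-16$. No 14th-moment estimate is involved, and interpolating between a 14th and a 16th moment cannot produce slope $14$ in $k\delta$. Heath-Brown/Boklan-type bounds, which give full saving $k$ at moment $7\cdot 2^{k-3}$, play no role here.

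This is not a peripheral case. The paper's Lemma 4.2 with $k=5$, $\delta=49/50$ falls exactly here and yields $m_1(5)=30$. With the correction, the three regimes are Hua--Hua, Hua--$j(j+1)$, and $j(j+1)$--$j(j+1)$. The switch happens because $2^j\le j(j+1)$ for $j\le 4$ while $2^j>j(j+1)$ for $j\ge5$.
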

\begin{proof}
    The lemma is from Li and Wang \cite [Lemma 4.2]{W}.
\end{proof}
\begin{lem}
	We have
	$$
	\bigintsss_{\mathbb{R} }\left | S_k(\lambda _7\alpha )  \right | ^{m_1(k)}K_\tau (\alpha )d\alpha \ll \tau X^{\frac{m_1(k)}{k}-\frac{49}{50}+\varepsilon  },
	$$
	where
	$$
	m_1(k)=
	\begin{cases}   
		2\lceil343k/50-20\rceil, & \text{if $k=5$,}\\  
		2\lceil (49k/50-\lfloor 49k/50\rfloor/2)(\lfloor 49k/50\rfloor+1)\rceil, 
		& \text{if $k\geq6$.}
	\end{cases}
	$$
\end{lem}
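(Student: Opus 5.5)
This is a direct specialization of Lemma 4.1, obtained by taking $\delta = 49/50$ there. That value satisfies $0<\delta<1$, so Lemma 4.1 applies. It gives
$$
\bigintsss_{\mathbb{R}}\left|S_k(\lambda_7\alpha)\right|^{m(k)}K_\tau(\alpha)\,d\alpha\ll \tau X^{\frac{m(k)}{k}-\frac{49}{50}+\varepsilon},
$$
with $m(k)$ evaluated at $k\delta = 49k/50$. What remains is to check that this $m(k)$ coincides with $m_1(k)$ in each case, which amounts to computing $\lfloor 49k/50\rfloor$ for $k\ge 5$.

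\textbf{Case $k=5$.} Here $k\delta = 49/10 = 4.9$, so $\lfloor k\delta\rfloor = 4$ and we are in the second branch of Lemma 4.1. That branch gives $m(5) = 2\lceil 7k\delta - 20\rceil$, and since $7k\delta = 7\cdot 49k/50 = 343k/50$, this is $2\lceil 343k/50 - 20\rceil = m_1(5)$.

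\textbf{Case $k\ge 6$.} Now $k\delta = 49k/50 \ge 294/50 = 5.88$, so $\lfloor k\delta\rfloor \ge 5$ and the third branch applies. Substituting $k\delta = 49k/50$ into $2\lceil (k\delta - \lfloor k\delta\rfloor/2)(\lfloor k\delta\rfloor+1)\rceil$ reproduces exactly the expression defining $m_1(k)$ for $k\ge 6$.

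The first branch, $\lfloor k\delta\rfloor\le 3$, never arises, because $49k/50 \ge 4.9$ for all $k\ge 5$. The only points needing care are therefore the two floor computations: $\lfloor 4.9\rfloor = 4$, and $49k/50 \ge 5.88$ for $k\ge 6$. Once these are confirmed, substituting $m(k) = m_1(k)$ into the bound above gives the stated estimate, with exponent $m_1(k)/k - 49/50 + \varepsilon$.
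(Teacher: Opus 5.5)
Your proposal is correct and is exactly the paper's argument: the paper proves this lemma by specializing Lemma 4.1 to $\delta = 49/50$. Your check that $\lfloor 49k/50\rfloor = 4$ for $k=5$ (second branch, with $7k\delta = 343k/50$) and that $\lfloor 49k/50\rfloor \ge 5$ for $k\ge 6$ (third branch) supplies the routine bookkeeping that the paper leaves implicit.
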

\begin{proof}
	By Lemma 4.1, we take $\delta=\frac{49}{50}$, so the proof is completed.
\end{proof}
\begin{lem}
	We have
	$$
	\bigintsss_{\mathbb{R} }\left | S_3(\lambda _2\alpha )  \right | ^{4}\left | S_k(\lambda _7\alpha )  \right | ^{m_2(k)}K_\tau (\alpha )d\alpha \ll \tau X^{\frac{m_2(k)}{k}+\frac{1}{3}+\varepsilon  },
	$$
	where
	$$
	m_2(k)=
	\begin{cases} 
		2\lceil (2k/3+1-\lfloor 2k/3 \rfloor)2^{\lfloor 2k/3\rfloor-1}\rceil, & \text{if $k=5$,}\\  
		2\lceil14k/3-20\rceil, & \text{if $k=6,7$,}\\  
		2\lceil (2k/3-\lfloor 2k/3\rfloor/2)(\lfloor 2k/3\rfloor+1)\rceil, 
		& \text{if $k\geq8$.}
	\end{cases}
	$$
\end{lem}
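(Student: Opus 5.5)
The plan is to derive this from the mixed-moment version of Lemma 4.1 in Li and Wang \cite{W}, taking the parameter $\delta$ there equal to $\frac{2}{3}$. Lemma 4.1 as quoted treats a pure moment $\int|S_k|^{m(k)}K_\tau$, but it is proved by an inductive Hua-type argument (Weyl differencing together with counting solutions of the underlying inequality), and the same argument works with an extra fixed weight. Concretely, I would rewrite the left-hand side as $\tau$ times the number of solutions of
\[
\Bigl|\lambda_2(p_1^3+p_2^3-p_3^3-p_4^3)+\lambda_7\sum_{i=1}^{m_2(k)/2}(q_i^k-q_i'^k)\Bigr|<\tau,
\]
weighted by logarithms. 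This uses $\hat K_\tau(t)=\max(0,\tau-|t|)\le\tau$ from (2.2).

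For the $S_3$ part I would use Hua's inequality for cubes: the number of solutions of $x_1^3+x_2^3=x_3^3+x_4^3$ with $x_i\le X^{1/3}$ is $\ll X^{2/3+\varepsilon}$. More generally, the number of representations of a fixed real interval of length $\ll 1$ by $\lambda_2(x_1^3+x_2^3-x_3^3-x_4^3)$ is $\ll X^{1/3+\varepsilon}$ on average over shifts. Equivalently, $\int_{\mathbb R}|S_3(\lambda_2\alpha)|^4K_\tau(\alpha)\,d\alpha\ll \tau X^{2/3+\varepsilon}$, which saves $X^{2/3}$ against the trivial $X^{4/3}$. I then aim to show that the $m_2(k)$ copies of $S_k$ give a saving of $X^{2/3}$ from the remaining variables. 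That is exactly the content of Lemma 4.1 with $k\delta=2k/3$, and it produces the total exponent $\frac{m_2(k)}{k}+\frac43-\frac23-\dots$. To get the stated bound $\frac{m_2(k)}{k}+\frac13$, the joint count must save $X^{1}$ overall: $X^{2/3}$ from the cubes plus $X^{1/3}$ net, once the $S_k$ moment is counted relative to $X^{m_2/k}$.

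To arrange this I would apply Hölder in the form
\[
\int|S_3|^4|S_k|^{m_2}K_\tau \le \Bigl(\sup_\alpha |S_3|^4\Bigr)^{1/2}\cdots,
\]
but more cleanly use the inequality directly. Fix $n=\lambda_2(p_1^3+p_2^3-p_3^3-p_4^3)$, which lies in a set of size $\ll X$ whose multiplicities are controlled by Hua. For each such $n$, the number of $k$-th power tuples with $|\lambda_7\sum(q^k-q'^k)+n|<\tau$ is bounded uniformly in the shift by $\ll X^{m_2/k-2/3+\varepsilon}$. This uniform-in-shift bound is precisely what Lemma 4.1 (with $\delta=2/3$) gives, since $\int|S_k|^mK_\tau e(n\alpha)\le\int|S_k|^mK_\tau$. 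Multiplying by the $\ll X^{4/3}$ cube tuples gives $\tau X^{m_2/k+2/3+\varepsilon}$, which is too weak. So the real step is the converse pairing: bound the number of $k$-tuples, which is $\ll X^{m_2/k}$ trivially, times the cube count uniform in the shift, $\ll X^{1/3+\varepsilon}$. That product gives exactly $\tau X^{m_2/k+1/3+\varepsilon}$.

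The main obstacle is therefore a shift-uniform Hua bound for four cubes of primes: for any real $v$,
\[
\int|S_3(\lambda_2\alpha)|^4K_\tau(\alpha)e(-v\alpha)\,d\alpha\ll\tau X^{1/3+\varepsilon}.
\]
Establishing this would make $m_2(k)$ superfluous. This suggests the intended proof instead interpolates, using Lemma 4.1 to handle the residual. I would first try the direct counting argument: fix $p_1,p_3$ (giving $X^{2/3}$ choices), and note that $p_2^3-p_4^3$ then lies in an interval of length $\ll1$, so the divisor bound leaves $X^{\varepsilon}$ choices. This gives $X^{2/3}$, not $X^{1/3}$. If that fails, I would fall back on splitting $|S_3|^4\le |S_3|^2\cdot X^{2/3}$ and using Lemma 4.1 with the $|S_3|^2$ absorbed by a mean value, following the original argument in \cite{W}. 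I would take the choice $\delta=2/3$ there as dictated by the case structure of $m_2(k)$.
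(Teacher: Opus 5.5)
Your proposal has a genuine gap: none of the routes you describe reaches the exponent $\frac{m_2(k)}{k}+\frac13$.

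Both pairings you compute give only $\tau X^{m_2(k)/k+2/3+\varepsilon}$, as you note yourself. The ``shift-uniform Hua bound'' you then isolate is
\[
\int_{\mathbb R}|S_3(\lambda_2\alpha)|^4K_\tau(\alpha)e(-v\alpha)\,d\alpha\ll\tau X^{1/3+\varepsilon}\quad\text{for all real } v .
\]
This is false: at $v=0$ the diagonal $p_1=p_3$, $p_2=p_4$ alone contributes $\gg\tau X^{2/3}$. Away from $0$ you would need a uniform bound $X^{1/3+\varepsilon}$ for the number of representations of $N\neq0$ by $x_1^3+x_2^3-x_3^3-x_4^3$. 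That bound is not available; your own divisor-bound count gives $X^{2/3+\varepsilon}$.

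The fallback $|S_3|^4\le X^{2/3}|S_3|^2$ only transfers the problem to
\[
\int_{\mathbb R}|S_3|^2|S_k|^{m_2(k)}K_\tau\ll\tau X^{m_2(k)/k-1/3+\varepsilon}.
\]
This is a full-saving mixed estimate, and no H\"older splitting into a mean value of $S_3$ plus Lemma 4.1 delivers it. The paper itself gives no argument here; it quotes Feng's thesis \cite[Lemma 5.2.6]{Z}.

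The missing idea is to apply your ``converse pairing'' not to the representation function of the cubes, but to a pointwise majorant of $|S_3|^4$ obtained by Weyl differencing. This is the first step of Hua's lemma, run with the weight $|S_k|^{m_2(k)}K_\tau$.

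By positivity ($\hat K_\tau\ge0$) you may replace the primes in $S_3$ by all integers of $I_2$, i.e.\ $S_3$ by $U_3$, losing only $L^4$. With $P=X^{1/3}$, two differencing steps and Cauchy's inequality give the real, nonnegative majorant
\[
|U_3(\beta)|^4\le 2P\sum_{|h_1|,|h_2|<P}\ \sum_{x}e\bigl(\beta h_1h_2(6x+3h_1+3h_2)\bigr),
\]
where $x$ runs over an interval depending on $h_1,h_2$. Put $\beta=\lambda_2\alpha$, multiply by $|S_k(\lambda_7\alpha)|^{m_2(k)}K_\tau(\alpha)$, and integrate.

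\emph{Triples with $h_1h_2=0$.} There are $\ll P^2$ of them. They contribute
\[
\ll P^3\int_{\mathbb R}|S_k(\lambda_7\alpha)|^{m_2(k)}K_\tau(\alpha)\,d\alpha\ll\tau X^{m_2(k)/k+1/3+\varepsilon}
\]
by Lemma 4.1 with $\delta=2/3$. This is exactly where $m_2(k)$ comes from, as you suspected.

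\emph{Triples with $h_1h_2\neq0$.} These produce frequencies $n\neq0$ with $|n|\ll X$, each occurring $\ll X^{\varepsilon}$ times by the divisor bound. Each integral
\[
\int_{\mathbb R}|S_k(\lambda_7\alpha)|^{m_2(k)}K_\tau(\alpha)e(\lambda_2n\alpha)\,d\alpha
\]
is a nonnegative combination of the terms $\max(0,\tau-|\lambda_2n+\lambda_7N|)$, where $N$ runs over values of the $k$-th power form. Each $N$ is compatible with only $O(1)$ integers $n$. Hence summing over all $n\in\mathbb Z$ costs only $\ll\tau X^{m_2(k)/k+\varepsilon}$. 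Multiplied by $2PX^{\varepsilon}$, this is again $\tau X^{m_2(k)/k+1/3+\varepsilon}$, which proves the lemma.
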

\begin{proof}
	The lemma is from
	Feng \cite[Lemma 5.2.6]{Z}.
\end{proof}
Combining H\"{o}lder's inequality, Hua's lemma with Lemmas 4.2  and 4.3, we obtain
\begin{align}
     &\bigintsss_{{\mathfrak{m}}_1 }\left |S_2({\lambda_1 \alpha  ,\rho})\prod\limits_{j=2}^5 S_3({\lambda_j \alpha }) S_4({\lambda_6 \alpha }) S_k({\lambda_7 \alpha})\right | ^2K_\tau (\alpha )d\alpha \nonumber\\
\ll&\bigg(\underset{\alpha \in {\mathfrak{m}}_1  }{\sup }\left | S_2({\lambda_1 \alpha ,\rho}) \right |  \bigg)^{\frac{3}{2}+\frac{4}{m_2(k)} }\bigg(\bigintsss_{\mathbb{R} }\left | S_2({\lambda_1 \alpha ,\rho }) \right | ^4K_\tau (\alpha )d\alpha \bigg)^{\frac{1}{8} -\frac{1}{m_2(k)} }\nonumber\\
      &\times\bigg( \bigintsss_{\mathbb{R} }\left | S_3(\lambda _2\alpha )  \right | ^{4}\left | S_k(\lambda _7\alpha )  \right | ^{m_2(k)}K_\tau (\alpha )d\alpha \bigg)^{\frac{2}{m_2(k)} }\bigg(\bigintsss_{\mathbb{R} }\left | S_4({\lambda_6 \alpha}) \right | ^{16} K_\tau (\alpha )d\alpha \bigg)^\frac{1}{8} \nonumber\\  
      &\times\bigg(\bigintsss_{\mathbb{R} }\left | S_3({\lambda_2 \alpha  }) \right | ^8K_\tau (\alpha )d\alpha \bigg)^{\frac{1}{4} -\frac{1}{m_2(k)} }\prod\limits_{j=3}^5 \bigg(\bigintsss_{\mathbb{R} }\left | S_3({\lambda_j \alpha}) \right | ^{12} K_\tau (\alpha )d\alpha \bigg)^\frac{1}{6} \nonumber\\
 \ll&\tau X^{\frac{13}{6}+\frac{2}{k}+\frac{75}{84}-\frac{2}{7m_2(k)} +\varepsilon } .
\end{align}

Similarly, we have
\begin{align}
     &\bigintsss_{{\mathfrak{m}}_2 }\left |S_2({\lambda_1 \alpha  ,\rho})\prod\limits_{j=2}^5 S_3({\lambda_j \alpha }) S_4({\lambda_6 \alpha }) S_k({\lambda_7 \alpha})\right | ^2K_\tau (\alpha )d\alpha \nonumber\\
\ll& \bigg(\underset{\alpha \in {\mathfrak{m}}_2  }{\sup }\left | S_3({\lambda_2 \alpha}) \right |  \bigg)^{\frac{8}{5}+\frac{12}{m_1(k)} }
\bigg(\bigintsss_{\mathbb{R} }\left | S_3({\lambda_2 \alpha}) \right | ^6K_\tau (\alpha )d\alpha \bigg)^{\frac{1}{15}-\frac{2}{m_1(k)}} \nonumber\\
    &\times\bigg(\bigintsss_{\mathbb{R} }\left | S_k({\lambda_7 \alpha   }) \right |^{m_1(k)} K_\tau (\alpha )d\alpha \bigg)^\frac{2}{m_1(k)}\bigg(\bigintsss_{\mathbb{R} }\left | S_2({\lambda_1 \alpha  ,\rho  }) \right | ^6K_\tau (\alpha )d\alpha \bigg)^{ \frac{1}{3} } \nonumber\\
    &\times\bigg(\bigintsss_{\mathbb{R} }\left | S_4({\lambda_6\alpha}) \right | ^{20}K_\tau (\alpha )d\alpha \bigg)^\frac{1}{10}\prod\limits_{j=3}^5 \bigg(\bigintsss_{\mathbb{R} }\left | S_3({\lambda_j \alpha}) \right | ^{12} K_\tau (\alpha )d\alpha \bigg)^\frac{1}{6} \nonumber\\
\ll&\tau X^{\frac{13}{6}+\frac{2}{k}+\frac{44}{45}-\frac{24}{25m_1(k)} +\varepsilon }.  
\end{align}

Therefore, combining (4.1) and (4.2), we get
\begin{eqnarray}
\bigintsss_{\mathfrak{m}_i }\left |S_2({\lambda_1 \alpha  ,\rho})\prod\limits_{j=2}^5 S_3({\lambda_j \alpha }) S_4({\lambda_6 \alpha }) S_k({\lambda_7 \alpha})\right | ^2K_\tau (\alpha )d\alpha \ll
X^{\frac{13}{6}+\frac{2}{k}+\sigma (k) +\varepsilon },
\end{eqnarray}
where $i=1,2$, $\sigma (k)$ is defined in (1.3).

Write $\mathfrak{m^*} =\mathfrak{m}\setminus (\mathfrak{m}_1\cup \mathfrak{m}_2 ) $. In order to establish the upper bound for the integral $\mathfrak{m}^*$, we use some ideas due to Harman \cite{G} to bound the measure of $ I(\tau ,\upsilon ,\mathfrak{m}^*,\rho)  $. Note that for any $\alpha \in \mathfrak{m}^* $, we have
$$
\left | S_2(\lambda_1 \alpha,\rho ) \right |> X^{\frac{1}{2}-\frac{1}{14} +2\varepsilon }, \enspace
\left |  S_3({\lambda_2 \alpha}) \right | > X^{\frac{1}{3}-\frac{1}{36}+2\varepsilon}.
$$
We divide $\mathfrak{m}^*$ into disjoint set $S(Z_1,Z_2,y)$, such that for $\alpha$ $\in$ $S(Z_1,Z_2,y)$, we have 
$$
Z_1< \left | S_2(\lambda_1 \alpha,\rho ) \right |\leq 2Z_1,\enspace
Z_2< \left | S_3({\lambda_2 \alpha}) \right |\leq 2Z_2,\enspace
y< \left | \alpha  \right | \le 2y.
$$
where $Z_1=2^{k_1}X^{\frac{3}{7}+2\varepsilon } ,\ Z_2=2^{k_2}X^{\frac{11}{36}+2\varepsilon }$ and $y = 2^r X^{- \frac{3}{4}-\varepsilon}$ for some non-negative integers $k_1$, $k_2$, $r$.
By \cite[Lemma 3.2]{Zhu}, we derive from $X^{\frac{1}{2}}\ge Z_1\ge X^{\frac{1}{2}-\frac{1}{14} +2\varepsilon } $ and $\left | S_2(\lambda_1 \alpha,\rho ) \right |>Z_1$ that there exist integers $a_1$ and $q_1$ satisfy 
\begin{eqnarray}
1\leq q_1\ll \bigg(\frac{X^{\frac{1}{2}+\varepsilon}}{Z_1} \bigg)^2, \enspace
\left | q_1\lambda _1\alpha -a_1 \right | \ll \bigg(\frac{X^{\frac{1}{2}+\varepsilon}}{Z_1}  \bigg)^2.
\end{eqnarray}
Similarly, we can get
$X^\frac{1}{3} \ge Z_2\ge X^{\frac{1}{3}-\frac{1}{36} +2\varepsilon },\enspace
 \left | S_3({\lambda_2 \alpha}) \right | > Z_2$.

\begin{lem}
Suppose that $X^\frac{1}{3} \ge Z_2\ge X^{\frac{1}{3}-\frac{1}{36} +2\varepsilon},\enspace
\left | S_3({\lambda_2 \alpha }) \right | > Z_2$. Then there are coprime integers $a_2, q_2$ satisfying
$$
1\leq q_2\ll \bigg(\frac{X^{\frac{1}{3} +\varepsilon }}{Z_2} \bigg)^2,\enspace
\left | q_2\lambda _2\alpha -a_2 \right | \ll X^{-1}\bigg(\frac{X^{\frac{1}{3} +\varepsilon}}{Z_2} \bigg)^2.
$$
\end{lem}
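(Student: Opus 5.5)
This lemma is the analogue, for prime cubes, of the statement from \cite[Lemma 3.2]{Zhu} used above for $S_2$. I would derive it from a known ``large values implies major arc'' result for Weyl sums over primes. The natural tool is Kumchev's theorem on exponential sums over primes, together with its cubic specialization as used in the literature; see, e.g., Zhao, or Kumchev, \emph{On Weyl sums over primes and almost primes}. That theorem says the following. Let $P=X^{1/3}$ and $\beta=\lambda_2\alpha$. If $\left|\sum_{p\sim P}(\log p)e(\beta p^3)\right|>Z$ with $Z\ge P^{1-\theta+\varepsilon}$ for a suitable $\theta=\theta(3)$, then there are coprime $a,q$ with
$$
1\le q\ll (P^{1+\varepsilon}/Z)^2,\qquad |q\beta-a|\ll P^{-3}(P^{1+\varepsilon}/Z)^2 .
$$
With $P^3=X$, this is exactly the conclusion of the lemma.

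The steps are as follows. First I reduce $S_3$ over $I_2=[(\eta X)^{1/3},X^{1/3}]$ to $O(L)$ dyadic sums $\sum_{p\sim P'}$ with $P'\asymp X^{1/3}$. By pigeonhole, one of them exceeds $Z_2/L$, and the factor $L$ is absorbed into $X^{\varepsilon}$. Second, I check that the threshold is admissible. The hypothesis $Z_2\ge X^{1/3-1/36+2\varepsilon}=P^{1-1/12+6\varepsilon}$ requires the theorem's range for $k=3$ to allow saving $P^{1/12}$. Kumchev's result for cubes allows $\theta=1/14$ in the sharp form, but the version with $q\ll (P/Z)^2$ and the range $Z\ge P^{11/12}$ (the ``$\frac{11}{36}$'' exponent) matches the form proved by Zhao via Vaughan's identity and Type I/II estimates. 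So I would cite that version directly. Third, I pass the rational approximation from $\beta=\lambda_2\alpha$ to the statement and rename $q_2=q$, $a_2=a$.

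If a self-contained argument is wanted, I would proceed in three stages. First, decompose $\sum_{p}\log p\, e(\beta p^3)$ by Vaughan's or Heath-Brown's identity into Type I sums $\sum_{m\le M}a_m\sum_{n}e(\beta m^3n^3)$ and Type II bilinear sums. Second, for Type I, use Weyl differencing (or Vinogradov's mean value) on the inner cubic sum. For Type II, use Cauchy--Schwarz followed by differencing. In both cases the goal is to show the sum is $\ll P^{1+\varepsilon}(q^{-1}+P^{-1}+q/P^3)^{1/2}$-type bounds. These are then sharpened to $P^{1+\varepsilon}q^{-1/2}(1+P^3|\beta-a/q|)^{-1/2}$ once $\beta$ has been placed near $a/q$ by Dirichlet's theorem. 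Third, invert the resulting inequality: a large value $Z$ forces $q(1+P^3|\beta-a/q|)\ll(P^{1+\varepsilon}/Z)^2$.

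The main obstacle is the second stage, namely obtaining this major-arc-shaped bound with exponent $1/2$, uniformly in the range $Z_2\ge X^{11/36}$. This requires a careful choice of the Vaughan parameters so that the Type II ranges avoid the bad region. That is why I would first try simply invoking the published theorem rather than redoing it.
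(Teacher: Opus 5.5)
Your plan is essentially the paper's own: the paper offers no argument beyond deferring to Ge--Zhao's Corollary~2.2, which rests on inverting a Kumchev-type estimate of the shape $\sum_{p}(\log p)e(\beta p^3)\ll P^{11/12+\varepsilon}+P^{1+\varepsilon}\bigl(q(1+P^3|\beta-a/q|)\bigr)^{-1/2}$, with $P=X^{1/3}$ and $a/q$ supplied by Dirichlet's theorem, and with the hypothesis $Z_2\ge P^{11/12+6\varepsilon}$ there to swamp the first term. Two small corrections. First, the estimate you cite must save $P^{1/12}$, so a $P^{1/14}$ saving is the weaker form, not the "sharp" one: it would only cover $Z_2\ge P^{13/14+\varepsilon}$, and $13/14>11/12$. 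Second, in your self-contained sketch the target bound has to keep that additive $P^{11/12+\varepsilon}$ term, because Type I/II estimates do not deliver the pure shape $P^{1+\varepsilon}q^{-1/2}(1+P^3|\beta-a/q|)^{-1/2}$.
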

\begin{proof}
    The proof is quite similar as \cite [Corollary 2.2]{Ge2} .
\end{proof}

We define $a_1a_2\ne 0$, otherwise we assume that there exists $\alpha \in \mathfrak{M} $. In addition, we subdivide $S(Z_1,Z_2,y) $ into sets 
$ S(Z_1,Z_2,y,Q_1,Q_2)$, where $ Q_j< q_j\leq 2Q_j $ on each set. Since
$$
a_2q_1\frac{\lambda _1}{\lambda _2} -a_1q_2=(q_1\lambda _1\alpha -a_1)\frac{a_2}{\lambda _2\alpha }-(q_2\lambda _2\alpha-a_2)\frac{a_1}{\lambda _2\alpha} ,
$$
then it follows from (4.4) and Lemma 4.4 that
$$
\left | a_2q_1\frac{\lambda _1}{\lambda _2}-a_1q_2 \right | \ll X^{\frac{2}{3}+4\varepsilon} Z_1^{-2}Z_2^{-2}\ll X^{-\frac{101}{126} -\varepsilon}.
$$
Recall that $ q=X^{\frac{101}{126} } $. Thus
$$
\left | a_2q_1\frac{\lambda _1}{\lambda _2}-a_1q_2 \right | = o (q^{-1}).
$$
We  also can get
$$
\left | a_2q_1 \right | \ll yQ_1Q_2.
$$
Therefore, if $\left | a_2q_1 \right |$ took $W$ distinct values, we could deduce the existence of $n $ satisfying
$$\left \| n\frac{\lambda _1}{\lambda _2}  \right \| \ll X^{-\frac{101}{126} -\varepsilon},\enspace
n\ll \frac{yQ_1Q_2}{R} .
$$
This would contradict $ a/q $ being a convergent to $\lambda_1/\lambda _2$ if $q$ is sufficiently large, unless 
$$
W\ll  \frac{yQ_1Q_2}{q}.
$$
By the upper bound for the divisor function, we observe that each value of $\left | a_2q_1 \right | $  corresponds to $O(X^\varepsilon )$  values of $a_2 $ and $q_1$. Therefore, each set in $ S(Z_1,Z_2,y,Q_1,Q_2)$ consists of $O(WX^\varepsilon )$ length intervals
\begin{align}
&\ll \min \bigg(Q_1^{-1}X^{-1} \bigg( \frac{X^{\frac{1}{2}+\varepsilon}}{Z_1}\bigg)^2,\enspace Q_2^{-1}X^{-1} \bigg( \frac{X^{\frac{1}{3}+\varepsilon}}{Z_2}\bigg)^2\bigg)
\notag\\
&\ll \frac{X^{-\frac{1}{6}+\varepsilon}}{Z_1Z_2Q_1^{\frac{1}{2}}Q_2^{\frac{1}{2}}}.\notag
\end{align}
Let $\mathfrak{L} $ denote such a set $ S(Z_1,Z_2,y,Q_1,Q_2)$. Note that
\begin{eqnarray}
\bigintsss_{\mathfrak{L} }d\alpha &\ll& yQ_1Q_2q^{-1}\frac{X^{-\frac{1}{6}+\varepsilon}}{Z_1Z_2Q_1^{\frac{1}{2}}Q_2^{\frac{1}{2}}} \notag\\
      &\ll& \frac{yq^{-1}X^{-\frac{1}{6}+\varepsilon}Q_1^{\frac{1}{2}}Q_2^{\frac{1}{2}}}{Z_1Z_2}.\notag
\end{eqnarray}
Then integration over the set $\mathfrak{L} $ gives
\begin{eqnarray}
     &&\bigintsss_{\mathfrak{L} }\left |S_2({\lambda_1 \alpha  ,\rho})\prod\limits_{j=2}^5 S_3({\lambda_j \alpha }) S_4({\lambda_6 \alpha }) S_k({\lambda_7 \alpha})\right | ^2K_\tau (\alpha )d\alpha \notag\\
   &\ll& \min (\tau ^2,y^{_{-2}}) Z_1^{2} Z_2^{2}X^{2}X^{\frac{1}{2}} X^{\frac{2}{k} }
    \bigg(\bigintsss_{\mathfrak{L} }d\alpha  \bigg) \notag\\
   &\ll& \tau y^{-1} Z_1^{2} Z_2^{2}X^{\frac{5}{2}+\frac{2}{k}} \frac{yq^{-1}X^{-\frac{1}{6}+\varepsilon}Q_1^{\frac{1}{2}}Q_2^{\frac{1}{2}}}{Z_1Z_2}.\notag
\end{eqnarray}  
Recall that
\begin{eqnarray}
	q=X^\frac{101}{126} ,\enspace Q_1\ll \bigg(\frac{X^{\frac{1}{2}+\varepsilon}}{Z_1} \bigg)^2, \enspace Q_2\ll \bigg(\frac{X^{\frac{1}{3}+\varepsilon  }}{Z_2} \bigg)^2.\notag
\end{eqnarray}
Thus
$$
\bigintsss_{\mathfrak{L} }\left |S_2({\lambda_1 \alpha  ,\rho})\prod\limits_{j=2}^5 S_3({\lambda_j \alpha }) S_4({\lambda_6 \alpha }) S_k({\lambda_7 \alpha})\right | ^2K_\tau (\alpha )d\alpha
\ll \tau X^{\frac{13}{6}+\frac{2}{k} +\frac{15}{126}+3\varepsilon}.
$$
From a dyadic dissection argument, we obtain
\begin{eqnarray}
\bigintsss_{\mathfrak{m}^* }\left |S_2({\lambda_1 \alpha  ,\rho})\prod\limits_{j=2}^5 S_3({\lambda_j \alpha }) S_4({\lambda_6 \alpha }) S_k({\lambda_7 \alpha})\right | ^2K_\tau (\alpha )d\alpha 
   &\ll& (\log{X} )^5 \tau X^{\frac{13}{6}+\frac{2}{k}+\frac{15}{126}+\frac{7}{4}\varepsilon }\notag\\
   &\ll& \tau X^{\frac{13}{6}+\frac{2}{k} +\sigma (k)+\varepsilon}.
\end{eqnarray}
Based on the comprehensive analysis of the existing results (4.3) and (4.5), we have 
\begin{eqnarray}
\bigintsss_{\mathfrak{m} }\left |S_2({\lambda_1 \alpha  ,\rho})\prod\limits_{j=2}^5 S_3({\lambda_j \alpha }) S_4({\lambda_6 \alpha }) S_k({\lambda_7 \alpha})\right | ^2K_\tau (\alpha )d\alpha
  &\ll& \tau X^{\frac{13}{6}+\frac{2}{k} +\sigma (k)+\varepsilon}.
\end{eqnarray}

\section{The trivial arc}

In this section, we estimate the contribution of the trivial arc. It follows from H\"{o}lder's inequality and (2.1) that 
\begin{eqnarray}
I(\tau ,\upsilon,\mathfrak{t},\rho)
   &\ll&\bigintsss_{\mathfrak{t} }\left|S_2(\lambda _1\alpha ,\rho )\prod\limits_{j=2}^5S_3(\lambda _j\alpha )S_4(\lambda _6\alpha )S_k(\lambda _7\alpha )\right|K_\tau (\alpha )d\alpha  \notag\\
   &\ll&\left(\bigintsss_{\mathfrak{t} }\left| S_2({\lambda_1 \alpha  ,\rho  }) \right| ^4 K_\tau (\alpha )d\alpha \right)^{\frac{1}{4}}\prod\limits_{j=2}^5\left( \bigintsss_{\mathfrak{t} }\left | S_3({\lambda_2 \alpha  ,\rho  }) \right | ^8 K_\tau (\alpha )d\alpha \right)^{\frac{1}{8}} \notag\\
    & &\times\left ( \bigintsss_{\mathfrak{t} }\left | S_4({\lambda_6 \alpha   }) \right |^8 K_\tau (\alpha )d\alpha \right )^\frac{1}{8}\left ( \bigintsss_{\mathfrak{t} }\left | S_k({\lambda_7 \alpha   }) \right |^8 K_\tau (\alpha )d\alpha \right )^\frac{1}{8}
    \notag\\
   &\ll&\left( \sum_{n=\left [ R \right ] }^{\infty }  \bigintsss_{n}^{n+1} \frac{\left | S_2({\lambda_1 \alpha  ,\rho  }) \right |^4 }{\left | \alpha  \right | ^2}d\alpha  \right)  ^\frac{1}{4}\prod\limits_{j=2}^5\left(\sum_{n=\left [ R \right ] }^{\infty }  \bigintsss_{n}^{n+1} \frac{\left |S_3({\lambda_j \alpha}) \right |^8 }{\left | \alpha  \right |^2}d\alpha \right)^{\frac{1}{8}}\notag\\
   & &\times \left( \sum_{n=\left [ R \right ] }^{\infty }  \bigintsss_{n}^{n+1} \frac{\left | S_4({\lambda_6 \alpha}) \right |^8 }{\left | \alpha  \right | ^2}d\alpha \right)^\frac{1}{8}\left ( \sum_{n=\left [ R \right ] }^{\infty }  \bigintsss_{n}^{n+1} \frac{\left | S_k({\lambda_7 \alpha   })\right |^8 }{\left | \alpha  \right | ^2}d\alpha  \right)  ^\frac{1}{8}.\notag
\end{eqnarray}
 Using Hua's lemma, we have
\begin{eqnarray}
 I(\tau ,\upsilon ,\mathfrak{t},\rho) &\ll&\left ( \sum_{n=\left [ R \right ] }^{\infty }\frac{1}{n^2}   \right ) 
\left ( \bigintsss_{0}^{1}  \left | S_2({\lambda_1 \alpha  ,\rho  }) \right |^4 d\alpha \right ) ^\frac{1}{4}\prod\limits_{j=2}^5\left(  \bigintsss_{0}^{1} \left |S_3({\lambda_j \alpha}) \right |^8 d\alpha \right)^{\frac{1}{8}}
 \notag\\
 & &\times\left( \bigintsss_{0}^{1}\left | S_4({\lambda_6 \alpha}) \right |^8 d\alpha \right)^\frac{1}{8}
\left ( \bigintsss_{0}^{1}  \left | S_k({\lambda_3 \alpha   }) \right |^8 d\alpha\right ) ^\frac{1}{8} \notag\\
  &\ll&R^{-1}X^{\frac{13}{12} +\frac{5}{32} +\frac{5}{8k}+\varepsilon } \notag\\
  &\ll&\tau ^2X^{\frac{13}{12} +\frac{1}{k} +\varepsilon }. \notag
\end{eqnarray}
Therefore, we can get
$$
R=\tau ^{-2}X^{\frac{5k-12}{32k} }L^3, 
$$
and we obtain
\begin{eqnarray}
	\left | I(\tau ,\upsilon ,\mathfrak{t},\rho)\right | = o \left ( {\tau}^2 L^{-1} X^{\frac{13}{12} +\frac{1}{k}} \right).
\end{eqnarray}

\section{Completion of the proof}
In this section, we complete the proof of Theorem 1.1. Let $\mathcal{E} _k$$(\mathcal {V}  ,X,\delta  ) $ be the set $v\in \mathcal{V}$ with  $v\leq X $ such that there exists no solution in primes $p_1$, $p_2$, $p_3$, $p_4$, $p_5$, $p_6$, $p_7$ satisfying inequality (1.2). Note that $E_k(\mathcal {V}  ,X, \delta  )=\left | \mathcal{E} _k(\mathcal {V},X, \delta  ) \right | $. Let $Y=\left [  \frac{1}{2} X,X\right ] $, $\mathcal{E} _k(Y)=\mathcal{E}_k(\mathcal{V},X,\delta  )\cap Y
 $ and $E_k(Y)=\left | \mathcal{E} _k(Y) \right |$.
By (3.8) and (5.1), we obtain
\begin{eqnarray}
\left | \sum_{v\in \mathcal{E}_k(Y)}\bigintsss_{\mathfrak{m} }S_2({\lambda_1 \alpha  ,\rho})\prod\limits_{j=2}^5 S_3({\lambda_j \alpha }) S_4({\lambda_6 \alpha }) S_k({\lambda_7 \alpha})K_\tau (\alpha )e(-\upsilon\alpha)d\alpha \right |\gg {\tau}^2 L^{-1} X^{\frac{13}{12} +\frac{1}{k}}E_k(Y) .\notag\\
\end{eqnarray}
By Cauchy's inequality, (2.2) and (4.6), we have
\begin{eqnarray}\label{bbb}
 & &\left | \sum_{v\in \mathcal{E}_k(Y)}\bigintsss_{\mathfrak{m} }S_2({\lambda_1 \alpha  ,\rho})\prod\limits_{j=2}^5 S_3({\lambda_j \alpha }) S_4({\lambda_6 \alpha }) S_k({\lambda_7 \alpha})K_\tau (\alpha )e(-\upsilon\alpha)d\alpha \right |\notag\\
&\ll&  \bigg(\bigintsss_{\mathfrak{m} } \left |S_2({\lambda_1 \alpha  ,\rho})\prod\limits_{j=2}^5 S_3({\lambda_j \alpha }) S_4({\lambda_6 \alpha }) S_k({\lambda_7 \alpha})\right |^2K_\tau (\alpha )d\alpha \bigg  )^\frac {1}{2} \notag \\
 & &\times\bigg(\bigintsss_{\mathbb{R} } \bigg| \sum_{v\in \mathcal{E}_k(Y) }e(-\alpha v)  \bigg|  ^2K_\tau(\alpha )d\alpha  \bigg)^{\frac{1}{2} }\notag \\
&\ll&\bigg{(\tau X^{\frac{13}{6}+\frac{2}{k} +\sigma (k)+\varepsilon  } }\bigg)^{\frac{1}{2} }
\bigg(\sum_{v_1,v_2\in \mathcal{E}_k(Y)}\max(0,\tau -\left | v_1-v_2 \right | ) \bigg)^{\frac{1}{2} } \notag \\
&\ll& \tau (E_k(Y))^{\frac{1}{2} } \bigg( X^{\frac{13}{6}+\frac{2}{k} +\sigma (k)+\varepsilon  } \bigg)^{\frac{1}{2} }.
\end{eqnarray} 
Then combining (6.1) and (6.2), we get
\begin{eqnarray}
E_k(Y)\ll \tau ^{-2}X^{\sigma (k)+\varepsilon }.
\end{eqnarray}
Finally, we use the stand dyadic argument. We take $\tau =X^{-\delta } $ and denote a well-spaced sequence by $\mathcal{V} $ for $v_i\in \mathcal{V}$  for $i=1,2,\cdots$, according to (6.3), by the definition of $\mathcal{E}_k(\mathcal{V},X,\delta  )$, it is easy to show that
$$
E_k(\mathcal{V},X,\delta  )\ll X^{\sigma (k)+2\delta +\varepsilon }.
$$
Since $\lambda _1/{\lambda _2}$  is irrational, there exists an infinite set of integer pairs $q,a$. Then we have $X=q^{\frac{126}{101} }\to +\infty$, as $q\to +\infty $. Thus, the proof of Theorem 1.1 is concluded.
\section*{Acknowledgements}

This work is supported by Natural Science Foundation of China (Grant Nos. 12361002) and Natural Science Foundation of Jiangxi Province (Grant Nos. 20224BAB201001). The authors would like to express their thanks to the referee suggestions and comments on the manuscript.


\begin{thebibliography}{99}
\bibitem{DH}
 Davenport, H.,  Heilbronn, H.: On indefinite quadratic forms in five variables. J. Lond. Math. Soc. 21, 185-193 (1946)

\bibitem{Z}
 Feng, Z.Z.: The exceptional sets for Waring-Goldbach problem, Ph.D. Thesis, Jilin University, Jilin, China (2019)
 
\bibitem{Ge}
 Ge, W.X.,  Zhao, F.: The exceptional set for Diophantine inequality with unlike powers of prime variables. Czech.
Math. J. 68 (1), 149-168 (2018)

\bibitem{Ge2}
 Ge, W.X., Zhao, F.: The values of cubic forms at prime arguments. J. Number Theory. 180, 694-709 (2017)
 
\bibitem{G}
 Harman, G.: The values of ternary quadratic forms at prime arguments. Math. 51, 83-96 (2005)
\bibitem{888}
Harman, G.: Trigonometric sums over primes I. Math. 28, 249-254 (1981)
\bibitem{H}
 Harman, G., Kumchev, A.V.: On sums of squares of primes.  Math. Proc. Cambridge Philos. Soc. 140 (1), 1-13 (2006)
 
\bibitem{666}
Languasco, A., Zaccagnini, A.: On a ternary Diophantine problem with mixed powers of primes. Acta Arith. 159, 345-362 (2013)

\bibitem{W}
 Li, B.Y., Wang, Y.C.: Diophantine approximation with prime variables and mixed powers. Ramanujan J. 60, 371-389 (2023)

\bibitem{lwp}
 Li, W.P.: One additive Diophantine inequality with mixed powers. Journal of Qufu Normal University (Natural Science) 31 (2), 39-42 (2005)
 
\bibitem{lwp2}
 Li, W.P., Gong, K.: Diophantine inequality with mixed powers 2, 3, a and b (in Chinese). Pure and Applied Mathematics 25 (3), 497-501 (2009)

\bibitem{qyy}
 Qu, Y.Y.,  Zeng, J.W.: Diophantine approximation with prime variables and mixed powers. Ramanujan
J. 52, 625-639 (2020)

\bibitem{T}
  Titchmarsh, E.C.: The Theory of the Riemann Zeta-Function, 2nd edn. Oxford University Press, Oxford (1986)

\bibitem{xly}
Xi, L.Y., Mu, Q.W.: An additive Diophantine inequality with mixed powers. Acta Math. Sinica, Chin. Ser. 67 (1), 187-194 (2024)
\bibitem{yhb}
 Yu, H.B.: Diophantine inequalities with mixed powers (II).  Acta Math. Sinica, Chin. Ser. 37 (3), 324-331 (1994)
 \bibitem{Zhu}
 Zhu, L.: Diophantine inequality by unlike powers of primes. Chin. Ann. Math. Ser. B 43 (1), 125-136 (2022)
\end{thebibliography}
\end{document}